
\documentclass[arxiv,reqno,twoside,a4paper,11pt]{amsart}

\usepackage{mltools}
\usepackage{math62,mlthm10-REAR} 

\usepackage{xy} \xyoption{all} 
\usepackage{upref,mathrsfs,enumerate} 

\usepackage[symbol]{footmisc}
\usepackage{perpage}
\MakePerPage[2]{footnote} 

\usepackage[colorlinks=true,linkcolor=blue,citecolor=blue,backref=page]{hyperref}
\usepackage{REARmac}



\linespread{1.05}
\usepackage[scaled]{helvet} 
\usepackage{courier} 
\usepackage[mathbf]{euler}




\setcounter{tocdepth}{1}
\numberwithin{equation}{section}

\makeatletter
\let\appendixname\@empty
\makeatother
\tolerance=2000 
\emergencystretch=20pt 
\begin{document}
\title[Divided differences, rearrangement Lemma, expansional formula]{
Divided differences in noncommutative geometry:
Rearrangement Lemma, functional calculus and expansional formula}

\author{Matthias Lesch}
\address{Mathematisches Institut,
Universit\"at Bonn,
Endenicher Allee 60,
53115 Bonn,
Germany}

\email{ml@matthiaslesch.de, lesch@math.uni-bonn.de}
\urladdr{www.matthiaslesch.de, www.math.uni-bonn.de/people/lesch}
\thanks{Partially supported by the Hausdorff Center for Mathematics}

\subjclass[2010]{Primary 46L87, 58B34; Secondary 47A60, 65D05}

%
\keywords{Rearrangement Lemma, divided difference, expansional formula,
modular curvature, topological tensor product, Banach algebra, functional
calculus, spectral measure, asymptotic expansion, noncommutative global
analysis} 
\copyrightinfo{2014-2015}{Matthias Lesch}

\begin{abstract}
We state a generalization of the Connes-Tretkoff-Moscovici
Rearrangement Lemma and give a surprisingly simple (almost trivial)
proof of it. Secondly, we put on a firm ground the multivariable
functional calculus used implicitly in the Rearrangement Lemma and
elsewhere in the recent modular curvature paper by Connes and
Moscovici \cite{ConMos2011}.  Furthermore, we show that the fantastic
formulas connecting the one and two variable modular functions of
loc.~cit. are just examples of the plenty recursion formulas which can
be derived from the calculus of divided differences. We show that the
functions derived from the main integral occurring in the
Rearrangement Lemma can be expressed in terms of divided differences
of the Logarithm, generalizing the ``modified Logarithm'' of
Connes-Tretkoff \cite{ConTre2011}.

Finally, we show that several expansion formulas related to the
Magnus expansion \cite{Mag1954} have a conceptual explanation in terms of a
multivariable functional calculus applied to divided differences.
\end{abstract}

\maketitle

\section{Introduction}  
\label{SecIntro}

This paper is inspired by the recent work on the spectral geometry
of non-commutative tori \cite{ConTre2011}, \cite{ConMos2011},
\cite{FarKha2012}, \cite{FarKha2013}. 

The striking novelty of the paper \cite{ConMos2011} is the occurrence
of universal one and two variable functions $K_0(s), H_0(s,t)$ in the
expression for the second heat coefficient \cite[(1)]{ConMos2011}\footnote{%
$\varphi_0$ is the natural trace on the non-commutative torus,
$\bigtriangleup$ is the flat Laplacian, $\bigtriangleup_\varphi$ the
conformal Laplacian with respect to the non-tracial weight
$\varphi(a)=\varphi_0(a e^{-h})$, $\Delta=e^{-h}\cdot e^{h}$ denotes
the modular operator, $\nabla=\log\Delta$.  $\square_\Re(h)$ is the
Dirichlet quadratic form in $\delta_1 h, \delta_2 h$, where 
$\delta_1, \delta_2$ are the natural derivations associated to the
$\R^2$--action on the non-commutative torus.  Finally, $\nabla_j$
signifies that $\nabla_j$ acts on the $j$-th factor. $e^h=:k^2$. 
}
\begin{equation}\label{EQIntro0}
a_2(a,\bigtriangleup_\varphi)=
\operatorname{Const}\cdot \varphi_0\Bl 
    a \bl K_0(\nabla)(\Delta h) + \frac 12
    H_0(\nabla_1,\nabla_2)(\square_\Re(h)) \br \Br.
\end{equation}
A rather fantastic aspect is that these functions satisfy relations of
the kind \cite[(4)]{ConMos2011}\footnote{$H$ and $K$ are modifications
of $H_0$ and $K_0$, for details see loc.~cit.}
\begin{equation}\label{EQIntro4}
H(a,b)= \frac{K(b)-K(a)}{a+b}
+\frac{K(a+b)-K(b)}{a}
-\frac{K(a+b)-K(a)}{b}.
\end{equation}
\Eqref{EQIntro4} is proved in loc.~cit.~by an a priori argument
\cite[Sec.~4.3]{ConMos2011}. 

On the other hand \Eqref{EQIntro4} is a sum of \emph{divided difference}s.
Namely, noting that $K$ in loc.~cit.~is an even function, we can
rewrite \Eqref{EQIntro4} as 
\begin{equation}\label{EQIntro5}
H(a,b)=[-a,b]K+[a+b,b]K-[a+b,a]K.
\end{equation}

So it seems as if divided differences could be the key to a lot of the
somewhat magic formulas occurring in this business.  And indeed when
thinking about \cite{ConMos2011} for a while the author noticed that
one stumbles over divided differences everywhere, notably in the
noncommutative Taylor expansion formula of the exponential function
and in concrete functions related to the Rearrangement Lemma, and soon
it became obvious that the role of divided differences in the subject
needs to be clarified. 

Divided differences are a standard tool in numerical analysis and they
can be calculated quite efficiently. We will recall the main facts
about them in Appendix \ref{SecDD} below.  To the best of our
knowledge their appearance in operator theory and functional calculus
is new.  In a different context, however, it was also observed in
\cite{BaxBru2011} that the Magnus expansion formula can be interpreted
in terms of the Genocchi-Hermite formula and hence related to divided
differences.

We now describe the content of the paper in more detail.

\subsection{Rearrangement Lemma and multivariable functional calculus}
An important technical tool for the calculation of heat coefficients
in the noncommutative setting is the Rearrangement Lemma which
informally reads
\begin{multline}\label{EQIntro1}
\int_0^\infty f_0(uk^2)\cdot b_1\cdot
f_1(uk^2)\cdot b_2\cldots b_p \cdot f_p(uk^2) \dint u\\
   = k^{-2} F(\modul 1;,\modul 1;\modul 2;,\ldots,
      \modul 1;\cldots\modul p;)(b_1\cldots b_p),
\end{multline}
where the function $F(s_1,\ldots,s_p)$ is
\[
  F(s) = \int_0^\infty f_0(u)\cdot f_1(us_1)\cldots f_p(u s_p) \dint u
\]
and $\modul j;$ signifies that the modular operator 
$\Delta = k^{-2}\cdot k^2$ acts on the $j$-th factor.
In \cite{ConMos2011} it is proved for the concrete integral
\begin{equation}\label{EQIntro2}
\int_0^\infty (uk^2)^{|\ga|+p-1}(1+u k^2)^{-\ga_0-1} \cdot b_1
\cdot (1+u k^2)^{-\ga_1-1}\cldots b_p\cdot
(1+u k^2)^{-\ga_p-1} \dint u,
\end{equation}
and the function
\begin{equation}\label{EQIntro3}
\fH(s,m)
:= \int_0^\infty
x^{|\ga|+p-1-m}\cdot (1+x)^{-\ga_0-1}\cdot
\prod_{j=1}^p (1+s_j x)^{-\ga_j-1} \dint x.
\end{equation}
The Rearrangement Lemma and the function $\fH(s,m)$ are crucial for
identifying the ingredients of \Eqref{EQIntro0} from a combinatorially
challenging expression for the resolvent expansion. The one and two
variable functions mentioned above are, after a change of variables,
simple linear combinations of basic $\fH(s,m)$ for a few values of
$\ga$.

The proof of \Eqref{EQIntro3} in loc.~cit.~consists of an intimidating
calculation involving explicit Fourier transforms of the factors of
the integrand after a change of variables. Since the Lemma has
appeared in several
versions of increasing complexity in the literature,
\cite[Lemma~6.2]{ConTre2011}, \cite[Lemma~6.2]{ConMos2011},
\cite[Lemma~4.2]{FarKha2012}, \cite[Prop.~3.4]{BhuMar2012},
\cite[Lemma~4.1]{FarKha2013}, we think that a systematic treatment
might be in order, also in light of possible generalizations
of the aforementioned papers to other noncommutative spaces.

One of the purposes of this note is to give a new proof of a fairly
general version of this Lemma. Our proof is not at all shorter than
the one in \cite[Lemma~6.2]{ConMos2011} but, at least the author
believes so, conceptually much simpler. We do not need explicit
Fourier transforms, all we use is the Spectral Theorem and the trivial
substitution $\int_0^\infty f(u\gl)\dint u =\gl\ii\int_0^\infty f(u)
\dint u$.
Namely, the Rearrangement Lemma is concerned with an integral,
\begin{equation}
\int_0^\infty f(uR_0, uR_1,\ldots, uR_n) \dint u,
\end{equation}
where $R_0,\ldots,R_n$ are commuting selfadjoint operators, and it
ultimately boils down to the justification of the ``operator
substitution'' $\tilde u = u R_0, du = R_0\ii d\tilde u$.

Secondly, we would like to put on a firm ground the functional
calculus which is implicitly used by the statement ``$\modul j;$
signifies that $\Delta$ acts on the $j$-th factor''. The authors hopes
that the current modest considerations will serve the community as he
has even heard the statement ``that these formulas should be
considered as formal since they are not based on a valid functional
calculus''. We will see that one should not be that pessimistic and
that the proper way to make sense of the notation $F(\modul
1;,\ldots)$ is the theory of tensor products of Banach and
$C^*$--algebras and the functional calculus for several commuting
operators. At the heart of the problem is the multiplication map
\begin{equation}
\mu_n: \sAn\ni a_0\oldots a_n\mapsto a_0\cldots a_n
\end{equation}
and the problem of extending it in a proper way to tensor product
completions. More concretely, $\mu_n$ extends by continuity to the
projective Banach algebra tensor product $\sAn_\gamma$.  On the other
hand a nice functional calculus for commuting selfadjoint operators is
available in the maximal $C^*$-tensor product $\sAn_\pi$. We suspect,
however, that $\mu_n$ does not extend by continuity to $\sAn_\pi$. We
circumvent this problem by establishing, for a selfadjoint element
$a\in\sA$, a smooth functional calculus in $\sAn_\gamma$ for the
commuting elements $a\pup j;=1_\sA\otimes\dots \otimes a\otimes \dots$
($a$ in slot $j$ counted from $0$).

\subsection{Divided differences} 
Coming back to divided differences and the formulas \Eqref{EQIntro4}
and \eqref{EQIntro5} we will show below that when dealing with the
integrand of \Eqref{EQIntro3}, divided difference occur in abundance
and the calculus of divided differences leads to a more or less
endless list of variations of \Eqref{EQIntro5}.

More concretely, we will express the function \Eqref{EQIntro3}
explicitly in terms of divided differences of the Logarithm:
\begin{equation}
\fH (s,m) = (-1)^{m+|\ga|+p-1}\cdot [1^{\ga_0+1}, s_1^{\ga_1+1}, \ldots,
s_p^{\ga_p+1}] \id^m\log.
\end{equation}

The modified logarithm $\cL_m$ of \cite[Lemma 3.2]{ConTre2011} 
is nothing but the divided difference 
\begin{equation}
\cL_m(s)= (-1)^m\cdot [1^{m+1},s]\log =(-1)^m\cdot [1^m,s]\cL_0,
\quad \cL_0(s)=\frac{\log(s)}{s-1},
\end{equation}
where $[1^m,s]f$ is an abbreviation for the divided difference
$[1,\ldots, 1,s]f$ with $m$ repetitions of $1$, \cf Secs.~\ref{ssEx2}
and \ref{ssA2}.  Note that $\cL_0(e^x)$ is the generating function for
the Bernoulli numbers, which occurs prominently in 
\cite{ConMos2011}\footnote{To be precise with 
$f(x)=\cL_0(e^x)$ we have 
\begin{equation*}
\frac 18 K(s) = \sum_{n=1}^\infty 
\frac{B_{2n}}{(2n)!} s^{2n-2}
     = \frac 1s (f(s) - f(0) - f'(0) s)
     = [0,s] f - [0,0] f = s [0,0,s] f.
\end{equation*}
}.

\subsection{Noncommutative Taylor expansion of the exponential function}
We show that the expansion formula for noncommutative variables $a$
and $b$ (\textit{cf., e.g.}, \cite[Sec.~6.1]{ConMos2011})
\begin{equation}\label{EQIntroExpansion}
e^{a+b} = e^a + \sum_{n=1}^\infty
     \int_{0\le s_n\le \ldots \le s_1\le 1} 
       e^{(1-s_1)a}\cdot b\cdot e^{(s_1-s_2)a}\cdot b\cldots b\cdot
       e^{s_na} \dint s
\end{equation}
can be interpreted nicely as an operator valued version of Newton's
interpolation formula involving divided differences
\begin{equation}
e^{a+b} = \sum_{n=0}^\infty \Bl[a\pup 0;,\ldots,a\pup n;] 
\exp_\gamma \Br(b\cldots b).
\end{equation}
This immediately leads to the following generalization
\begin{equation}
f(a+b) \sim_{b\to 0} \sum_{n=0}^\infty 
\bl[a\pup 0;,\ldots,a\pup n;]  f_\gamma \br(b\cldots b),
\end{equation}
for selfadjoint elements in a $C^*$--algebra and a Schwartz function
$f$.  The linear term in this expansion formula is at the heart of the
relations \Eqref{EQIntro4}, \eqref{EQIntro5}.  As an application we
give a conceptually new proof of the corresponding results in
\cite[Lemma 4.11 and Lemma 4.12]{ConMos2011}.

\subsection{Explicit examples} 
Finally, in Sec.~\ref{SecExamples} we discuss explicit examples of one
and two variable functions derived from \Eqref{EQIntro5} and compare
them to the explicit formulas given at the end of \cite{ConMos2011}.
In the preparation of Sec.~\ref{SecExamples} we used the open source
computer algebra system \texttt{Maxima}. However, the results as they
stand can be checked (a posteriori) by hand.

\subsection{} This paper is a byproduct of a recent joint project with
Henri Moscovici \cite{LesMos2015}; it is used in some of the
concrete calculations in Sec. 4 of that paper.

\subsection{Acknowledgment} I thank Alan Carey, Joachim Cuntz, Henri
Moscovici Markus Pflaum and Adam Rennie for helpful conversations and
suggestions.

\tableofcontents
\section{An abstract operator substitution Lemma}

\label{SecOSL}

\subsection{Notation}\label{ssNot}
$\N=\{0,1,2,\ldots\}, \Z, \R, \C$ denotes the natural numbers,
integers, real and complex numbers resp.  $\R_{\ge 0}$ denotes
$\bigsetdef{x\in\R}{x\ge 0}$, $\R_{>0}$, $\R_{<0}, \Z_{>0}, 
\Z_{\ge 0}$ etc. is used accordingly.  Instead of the clumsy 
$(\R_{\ge 0})^n$ we write $\R_{\ge 0}^n$.

We will frequently use the multiindex notation for partial derivatives
and factorials. Recall that if $\ga=(\ga_0,\ldots,\ga_n)\in \N^{n+1}$ 
is a multiindex then one abbreviates $\ga!:=\prod_j \ga_j!$,
$|\ga|:=\sum \ga_j$, and 
$\pl_x^\ga=\prod_j\pl_{x_j}^{\ga_j}, x=(x_0,\ldots x_n)$. 
Furthermore, we use the Pochhammer symbol for the rising and falling
factorial powers, see \Eqref{EQ3.1a}, \eqref{EQ3.1b}.

\subsection{} \label{ss2.1}
\newcommand\CastR{C^*(I, R_0,\ldots,R_n)}
Let $\sH$ be a Hilbert space and let $R_0,\ldots, R_n, n\ge 1,$ be
commuting positive selfadjoint operators in $\sH$, \ie~all operators
$R_j$ are assumed to be $\ge 0$ and invertible. These operators
generate a commutative unital $C^*$--subalgebra, $\sA=\CastR$, of the
$C^*$--algebra of bounded linear operators, $\sL(\sH)$, on the Hilbert
space $\sH$. By the Gelfand Representation Theorem, there exists a
compact subset $X\subset \prod_{j=0}^n \spec(R_j)\subset \C^{n+1}$ and
a $*$--isomorphism
\[
\Phi: C(X) \longrightarrow \CastR\subset \sL(\sH)
\]
which sends the constant function $1$ to the identity operator $I$ and
the function $x\mapsto x_j$ onto the operator $R_j$, $j=0, \ldots, n$.
$\Phi$ is called the spectral measure of $R_0,\ldots, R_n$, \cf
\cite[Chap.~12]{Rud91}. For a continuous function $f\in C(X)$ on
writes suggestively $f(R_0,\ldots, R_n):=\Phi(f)$. $\Phi$ may also be
viewed as an operator valued measure, \cf \cite[12.17]{Rud91}.  We
write $dE$ for the associated resolution of the identity in the sense
of loc.~cit. Then $f(R_0,\ldots, R_n)=\int_X f(\gl) \dint E(\gl)$.

For each pair of Hilbert space vectors $x,y\in \sH$ the spectral
measure $\Phi$ induces a complex Radon measure $E_{x,y}$ on $X$
by the identity
\[
\binn{ f(R_0,\ldots,R_n) x,y} = \int_X f(\gl)\dint E_{x,y}(\gl),
\quad f\in C(X).
\]

\begin{lemma}\label{LSpectralFubini} With the previously introduced notation let
$f:\R_{\ge 0}\times X\to \C$ be a continuous function satisfying the
integrability condition
\begin{equation}\label{EQIntegrability}
   \int_0^\infty \sup_{\gl\in X} |f(u,\gl)| \dint u <\infty.
\end{equation}   
Define $F:X\to\C$ by the parameter integral
\begin{equation*}
   F(\gl):= \int_0^\infty f(u,\gl) \dint u.
\end{equation*}
Then the integral $\int_0^\infty f(u,R_0,\ldots,R_n)\dint u$
exists in the Bochner sense and equals $F(R_0,\ldots,R_n)$.
\end{lemma}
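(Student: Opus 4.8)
The plan is to reduce everything to the scalar Bochner integral by exploiting the $*$-isomorphism $\Phi$ and the pairing with Hilbert space vectors. First I would verify that the $\sL(\sH)$-valued map $u\mapsto f(u,R_0,\ldots,R_n)=\Phi(f(u,\cdot))$ is Bochner integrable on $\R_{\ge 0}$. Continuity of $f$ in $u$ together with continuity of $\Phi$ and the sup-norm estimate $\|f(u,R_0,\ldots,R_n)\|=\|f(u,\cdot)\|_{C(X)}=\sup_{\gl\in X}|f(u,\gl)|$ shows that $u\mapsto f(u,R_0,\ldots,R_n)$ is a continuous (hence strongly measurable, separably valued) map whose norm is dominated by the integrable function in \Eqref{EQIntegrability}; so the Bochner integral $\int_0^\infty f(u,R_0,\ldots,R_n)\dint u$ exists.

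Next I would identify the value of this Bochner integral with $F(R_0,\ldots,R_n)$. The cleanest route is to test against vectors: for fixed $x,y\in\sH$ the functional $T\mapsto\binn{Tx,y}$ is bounded and linear on $\sL(\sH)$, so it commutes with the Bochner integral, giving $\binn{\bl\int_0^\infty f(u,R_0,\ldots,R_n)\dint u\br x,y}=\int_0^\infty\binn{f(u,R_0,\ldots,R_n)x,y}\dint u=\int_0^\infty\bl\int_X f(u,\gl)\dint E_{x,y}(\gl)\br\dint u$. Here I would apply the classical Fubini theorem for the product of Lebesgue measure on $\R_{\ge 0}$ and the complex Radon measure $E_{x,y}$ on $X$; the hypothesis \Eqref{EQIntegrability} together with the total variation bound $|E_{x,y}|(X)\le\|x\|\,\|y\|$ furnishes the required absolute integrability $\int_0^\infty\int_X|f(u,\gl)|\,d|E_{x,y}|(\gl)\dint u<\infty$. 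Swapping the order yields $\int_X\bl\int_0^\infty f(u,\gl)\dint u\br\dint E_{x,y}(\gl)=\int_X F(\gl)\dint E_{x,y}(\gl)=\binn{F(R_0,\ldots,R_n)x,y}$. (One also needs $F\in C(X)$, which follows from continuity of $f$, compactness of $X$, and dominated convergence using \Eqref{EQIntegrability}.)

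Finally, since $x,y\in\sH$ were arbitrary, the two operators $\int_0^\infty f(u,R_0,\ldots,R_n)\dint u$ and $F(R_0,\ldots,R_n)$ have the same matrix coefficients and hence coincide, which is the claim.

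I expect the only mildly delicate point to be the justification of the Fubini interchange against the operator-valued spectral measure: one must be careful to work with the scalar measures $E_{x,y}$ (where ordinary Fubini applies) rather than attempting a Fubini theorem directly for the $\sL(\sH)$-valued measure $E$, and to produce the uniform total-variation bound $|E_{x,y}|(X)\le\|x\|\,\|y\|$ that makes the double integral absolutely convergent. Everything else is routine once the Bochner integrability in Step 1 and this measure-theoretic bookkeeping are in place.
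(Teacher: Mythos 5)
Your proposal is correct and follows essentially the same route as the paper's own proof: establish Bochner integrability via the $C^*$-identity $\|f(u,R_0,\ldots,R_n)\|=\sup_{\gl\in X}|f(u,\gl)|$, pair with vectors $x,y$, and apply scalar Fubini to the product of Lebesgue measure with the complex Radon measure $E_{x,y}$ using the bound $|E_{x,y}|(X)\le\|x\|\,\|y\|$. No substantive differences.
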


In more suggestive notation this Lemma is a Fubini Theorem for the
product measure $dE du$, \ie~the product of the spectral measure
$\Phi$ and the Lebesgue measure on the half line $\R_{\ge 0}$.  Namely, using
the integral notation with respect to the resolution of the identity
it means
\[
\int_0^\infty \int_X f(u,\gl) \dint E(\gl)\dint u = \int_X \int_0^\infty
f(u,\gl) \dint u \dint E(\gl).
\]

\begin{proof} Let us first note that due to the integrability condition
\Eqref{EQIntegrability} and the Dominated Convergence Theorem
the function $F$ is indeed continuous.

To see the claimed Bochner integrability we note that $u\mapsto
f(u,R_0,\ldots, R_n)$ is continuous (\textit{cf.}~\cite[Prop.~4.10]{Tak:TOA}).
Furthermore, by the Spectral Theorem and the integrability condition
\Eqref{EQIntegrability} we have for the integral of the norm
\[
\int_0^\infty \|f(u,R_0,\ldots,R_n)\| \dint u
= \int_0^\infty \sup_{\gl\in X} |f(u,\gl)| \dint u < \infty.
\]
Thus the integral exists in the Bochner sense. 
Furthermore, for vectors $x,y\in\sH$ we have by continuity of the
Bochner integral
\begin{equation}\label{EQPfSpFub1}\begin{split}
  \binn{\int_0^\infty &f(u,R_0,\ldots, R_n) \dint u \; x, y}
     = \int_0^\infty \inn{f(u,R_0,\ldots, R_n) \, x, y} \dint u\\
     &= \int_0^\infty \int_X f(u,\gl) \dint E_{x,y}(\gl) \dint u.
\end{split}\end{equation}     
The latter integral is an ordinary product integral of the Radon
measure $E_{x,y}$ and the Lebesgue measure. Again by the integrability
condition \Eqref{EQIntegrability} we have
\[
\int_0^\infty \int_X | f(u,\gl)| \dint |E_{x,y}(\gl)| \dint u
  \le \|x\|\cdot \|y\|\cdot \int_0^\infty \sup_{\gl\in X} |f(u,\gl)|
  \dint u
  <\infty,
\]
hence Fubini's Theorem applies and we continue \Eqref{EQPfSpFub1} to
obtain
\[\begin{split}
\eqref{EQPfSpFub1}& = \int_X \int_0^\infty f(u,\gl) \dint u \dint E_{x,y}(\gl)\\
   &= \int_X F(\gl) \dint E_{x,y}(\gl) = \binn{F(R_0,\ldots, R_n)\, x,y}.
\end{split}\] 
This proves that indeed $\int_0^\infty f(u,R_0,\ldots,R_n) \dint u
= F(R_0,\ldots, R_n)$.
\end{proof}

\begin{theorem}[Operator Substitution Lemma]\label{TOSL} 
Let $R_0,\ldots, R_n$ be commuting selfadjoint positive operators as
in Sec.~\ref{ss2.1}.  Furthermore, let $f:\R_{\ge 0}^{n+1}=(\R_{\ge 0})^{n+1}\to\C$ be a
continuous function such that for each pair of positive real numbers
$0<C_1<C_2$ one has
\begin{equation}\label{EQIntegrability1}
\int_0^\infty \sup_{\substack{C_1\le s_j\le C_2\\ 0\le j\le n}}
  |f(us)| \dint u <\infty.
\end{equation}
Then for the functions
\[
F:\R_{>0}^{n+1}\ni s\mapsto \int_0^\infty f(u\cdot s) \dint u
\]
and
\[
G:\R_{>0}^n\ni \gl\mapsto \int_0^\infty f(u,u\gl_1,\ldots, u\gl_n)
\dint u
\]
we have the identity
\begin{multline*}
  \int_0^\infty f(uR_0,uR_1,\ldots, uR_n)\dint u
      = F(R_0,\ldots, R_n)\\
       = R_0\ii G(R_0\ii R_1,\ldots, R_0\ii R_n)
       = R_0\ii \int_0^\infty f(u, u R_0\ii R_1,\ldots, u R_0\ii R_n)
      \dint u.
\end{multline*}
Both integrals exist in the Bochner sense.
\end{theorem}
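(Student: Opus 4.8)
The plan is to reduce the Operator Substitution Lemma to Lemma~\ref{LSpectralFubini} in two stages, first by justifying the pointwise (spectral) statement and then by invoking the Fubini/Bochner result already established. First I would observe that the three functions $F$, $G$ are well defined and continuous on the indicated domains: on any compact subset of $\R_{>0}^{n+1}$ (resp.\ $\R_{>0}^n$) the integrability hypothesis \Eqref{EQIntegrability1} supplies a uniform integrable dominating function, so dominated convergence gives continuity, exactly as in the opening remark of the proof of Lemma~\ref{LSpectralFubini}. Since $R_0,\ldots,R_n$ are positive and invertible, their joint spectrum $X$ is a compact subset of $\R_{>0}^{n+1}$, so $F$ restricted to $X$ is continuous and $F(R_0,\ldots,R_n)$ makes sense via the spectral measure $\Phi$.

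Next I would treat the first equality $\int_0^\infty f(uR_0,\ldots,uR_n)\,\dint u = F(R_0,\ldots,R_n)$. For fixed $u$ the map $\gl\mapsto f(u\gl)$ is continuous on $X$, so $f(uR_0,\ldots,uR_n) = \Phi\bl \gl\mapsto f(u\gl)\br$ by the functional calculus; thus the integrand of the Bochner integral is precisely the operator $g(u,R_0,\ldots,R_n)$ attached to the function $g(u,\gl):=f(u\gl)$. The integrability condition \Eqref{EQIntegrability1}, applied with $C_1=\min_j\min_{\gl\in X}\gl_j>0$ and $C_2=\max_j\max_{\gl\in X}\gl_j<\infty$, says exactly that $\int_0^\infty \sup_{\gl\in X}|g(u,\gl)|\,\dint u<\infty$, i.e.\ the hypothesis \Eqref{EQIntegrability} of Lemma~\ref{LSpectralFubini} holds for $g$ on $\R_{\ge0}\times X$. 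Applying that lemma yields that $\int_0^\infty f(uR_0,\ldots,uR_n)\,\dint u$ exists in the Bochner sense and equals $\Phi\bl\gl\mapsto\int_0^\infty f(u\gl)\,\dint u\br = F(R_0,\ldots,R_n)$.

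For the second equality I would reduce to a pointwise identity on $X$: I claim $F(\gl) = \gl_0\ii\, G(\gl_0\ii\gl_1,\ldots,\gl_0\ii\gl_n)$ for every $\gl\in X$. This is the genuine content of the ``operator substitution'' and follows from the \emph{scalar} substitution $\int_0^\infty \psi(u\ga)\,\dint u = \ga\ii\int_0^\infty\psi(u)\,\dint u$ (valid for $\ga>0$ and $\psi\in L^1(\R_{\ge0})$) applied in the variable $u$: writing $f(u\gl) = f\bl u\gl_0,\,u\gl_0\cdot(\gl_0\ii\gl_1),\ldots,u\gl_0\cdot(\gl_0\ii\gl_n)\br$ and substituting $\tilde u = u\gl_0$ gives $F(\gl)=\gl_0\ii\int_0^\infty f(\tilde u,\tilde u\,\gl_0\ii\gl_1,\ldots)\,\dint\tilde u = \gl_0\ii G(\gl_0\ii\gl_1,\ldots,\gl_0\ii\gl_n)$; the integrability needed is again furnished by \Eqref{EQIntegrability1}. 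Since $\gl\mapsto \gl_0\ii G(\gl_0\ii\gl_1,\ldots)$ is a continuous function on $X$ (as $\gl_0$ is bounded away from $0$ on $X$ and $G$ is continuous on the relevant compact set), applying $\Phi$ to this scalar identity and using the multiplicativity of the functional calculus — $\Phi(\gl\mapsto\gl_0\ii)=R_0\ii$, $\Phi(\gl\mapsto\gl_0\ii\gl_j)=R_0\ii R_j$, and $\Phi$ commutes with composition — gives $F(R_0,\ldots,R_n)=R_0\ii G(R_0\ii R_1,\ldots,R_0\ii R_n)$. The final equality, that $R_0\ii G(R_0\ii R_1,\ldots,R_0\ii R_n)$ equals $R_0\ii\int_0^\infty f(u,uR_0\ii R_1,\ldots)\,\dint u$ with a Bochner integral, follows by applying Lemma~\ref{LSpectralFubini} a second time to the commuting tuple $(R_0\ii R_1,\ldots,R_0\ii R_n)$ and the integrand $h(u,\eta):=f(u,u\eta_1,\ldots,u\eta_n)$, whose integrability over $\R_{\ge0}\times Y$ (with $Y$ the joint spectrum of the $R_0\ii R_j$) is once more a case of \Eqref{EQIntegrability1}.

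The main obstacle I anticipate is bookkeeping rather than anything deep: one must be careful that all the auxiliary compact sets ($X$, $Y$, and their projections) lie in $\R_{>0}$-regions where the various $\gl_0\ii$ and $\gl_0\ii\gl_j$ are continuous and bounded, so that \Eqref{EQIntegrability1} can be invoked with a legitimate choice of $0<C_1<C_2$ at each step, and that the two applications of Lemma~\ref{LSpectralFubini} are set up with the correct base space. Once the scalar substitution identity $F(\gl)=\gl_0\ii G(\gl_0\ii\gl_1,\ldots)$ is in hand, the operator statement is a formal consequence of functoriality of the functional calculus $\Phi$.
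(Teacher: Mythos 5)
Your proposal is correct and follows essentially the same route as the paper: the paper's proof likewise applies Lemma~\ref{LSpectralFubini} to the two auxiliary functions $g(u,s)=f(us)$ and $h(u,\gl)=f(u,u\gl_1,\ldots,u\gl_n)$ and combines this with the scalar substitution identity $F(s)=s_0\ii G(s_0\ii s_1,\ldots,s_0\ii s_n)$. Your write-up merely spells out the bookkeeping (choice of $C_1,C_2$ from the compact joint spectra, continuity of $F$ and $G$, compatibility of the functional calculi) that the paper leaves implicit.
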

\begin{remark} We have formulated the Operator Substitution Lemma
multiplicatively. There is an obvious additive analogue
for integrals of the form 
$\int_\R h(x+T_0,x+T_1,\ldots,x+T_n) \dint x =
\int_\R h(x, x+T_1-T_0,\ldots, x+T_n-T_0) \dint x$ 
for commuting selfadjoint operators $T_0,\ldots, T_n$ and
appropriate functions $f\in C_0(\R^{n+1})$. We leave the details
to the reader.
\end{remark}
\begin{proof}
Put $g(u,s):=f(us)$, $0<u<\infty$, $s\in \R_{>0}^{n+1}$ and
$h(u,\gl):=f(u,u\gl_1,\ldots,u \gl_n)$, $\gl\in \R_{>0}^n$. 
Then by \Eqref{EQIntegrability1}
the Lemma \plref{LSpectralFubini} applies to both functions $g$ and $h$.
Furthermore, 
\[\begin{split}
   F(s) & = \int_0^\infty f(u s_0, u s_1,\ldots, u s_n) \dint u\\
        & = \int_0^\infty s_0\ii f(u, u s_0\ii s_1,\ldots, u s_0\ii
         s_n) \dint u
         = s_0\ii G(s_0\ii s_1,\ldots, s_0\ii s_n),
\end{split}\]        
and the claim follows.
\end{proof}

\begin{example}\label{ExOSL}
Let $\ga=(\ga_0,\ldots,\ga_p)\in \N^{p+1}$ 
be a multiindex. Then put
\begin{equation*} 
f(x_0,x_1,\ldots, x_p):= x_0^\nu\cdot\prod_{j=0}^p (1+x_j)^{-\ga_j-1},
\quad -1<\nu<|\ga|+p.
\end{equation*}
We show that $f$ satisfies the integrability condition
\Eqref{EQIntegrability1} of Theorem \plref{TOSL}.
Given $0<C_1<C_2$ then for $C_1\le s_j\le C_2$
and $0\le u\le 1$ we have
\[
  |f(us)|\le s_0^\nu\cdot u^\nu\le \textup{const} \cdot u^\nu,
\]
while for $u\ge 1$ we have
\[
  |f(us)|  = \Bigl| (s_0 u)^\nu \prod_{j=0}^p (s_j u)^{-\ga_j-1}\cdot
      \prod_{j=0}^p \bigl(\frac{(s_ju)}{1+s_j u)}\bigr)^{\ga_j+1}\Bigr| 
      \le \textup{const}\cdot |u|^{\nu-|\ga|-p-1},
\]     
hence the claim.

Inductively, one easily sees that for any multiindex $\ga$ the
function $\pl_s^\ga f(us)=u^\ga (\pl^\ga f)(us)$ also satisfies the
integrability condition \Eqref{EQIntegrability}.
\end{example}

\section{Tensor products and the Rearrangement Lemma}
\label{SecTPRL}

\subsection{Projective vs. maximal $C^*$--tensor product, 
the contraction map}

\subsubsection{Tensor product completions}
Let $\sA$ be a unital $C^*$--algebra. Denote by $\sA^{\otimes n+1}:=
\sA\otimes\ldots \otimes \sA$ the $(n+1)$--fold algebraic tensor
product. For elementary tensors we use the notations
$(a_0,\ldots,a_n)$ and $a_0\otimes \ldots \otimes a_n$ as synonyms. 
By
\[ 
 \mu_n: \sAn \to \sA, (a_0,\ldots,a_n)\mapsto a_0\cldots a_n
\]
we denote the multiplication map. 

We discuss the issue of extending the multiplication map to
tensor product completions of $\sAn$.
We denote by
$\sAn_\gamma$ the projective tensor product completion of $\sAn$,
\textit{cf., e.g.,} \cite{Gel1959}. That is $\sAn_\gamma$ is the completion of $\sAn$
with respect to the norm 
\[
\| x \|_\gamma = \inf \sum_i \| a_0^{(i)}\|\cldots \| a_{n}^{(i)}\|,
\]
where the infimum is taken over all representations of $x\in\sAn$ as
a finite sum $\sum_i (a_0^{(i)},\ldots, a_n^{(i)})$. $\sAn_\gamma$ is
a Banach--algebra. Moreover, the adjoint map is easily seen to be
continuous with respect to the norm $\|\cdot\|_\gamma$,
hence $\sAn_\gamma$ is a Banach--$*$--algebra.

Furthermore, let $\sAn_\pi$ be the maximal $C^*$--algebra
tensor product completion of $\sAn$ \cite[Sec.~IV.4]{Tak:TOA}. That
is $\sAn_\pi$ is the completion of $\sAn$ with respect to the norm
\[
\|x\|_\pi = \sup \|\gvr(x)\|,
\]
where $\gvr$ runs through all $*$--representations of $\sAn$.
$\|\cdot\|_\pi\le \|\cdot\|_\gamma$ and hence there is a natural
continuous $*$--homomorphism $\prgp: \sAn_\gamma\to \sAn_\pi$
whose range is dense.

Each of the two tensor products comes with a benefit and a curse and
these are mutually exclusive. The projective tensor product behaves
well in the sense that $\mu_n$ extends by continuity to a linear map
$\sAn_\gamma \to \sA$. It behaves badly in the sense that $\sAn_\gamma$,
although being a Banach $*$--algebra, is a $C^*$--algebra only in
trivial cases. 
On the other hand the $C^*$--algebra $\sAn_\pi$ behaves well in the
sense that it is $C^*$ and hence, \eg there is a continuous functional
calculus for commuting selfadjoint elements. It behaves badly in the
sense that the author does not know whether the multiplication map
$\mu_n$ extends by continuity to $\sAn_\pi$; in fact he suspects that
there exist interesting cases where it does not extend.  A poll among
available experts on tensor products was inconclusive.

Needless to say, for matrix algebras the algebraic tensor product is
already complete and there is no problem. Even in this seemingly
trivial case the results outlined below do have aspects which, 
to the best of our knowledge, seem to be new.

\subsubsection{The contraction map}\label{ssContraction}\label{ssTen2} 
We come to a crucial construction. 
For $a\in \sAn_\gamma$ and elements $b_1,\ldots, b_n\in \sA$ we write,
motivated by \cite[Lemma 6.2]{ConMos2011}, \cf \Eqref{EQIntro1}, suggestively
\begin{equation}\label{EQMult}
  a(b_1\cldots b_n):= \mu_n \bl a\cdot (b_1\otimes\ldots \otimes
  b_n\otimes  1_\sA)\br \in \sA,
\end{equation}
and call the result the contraction of $a$ by $b_1\oldots
b_n$.

Note that if $a=(a_0,\ldots,a_n)$ is an elementary tensor then
\begin{equation}\label{EQMult1}
  (a_0,\ldots,a_n)(b_1\cldots b_n)= a_0 \cdot b_1\cdot a_1\cldots
  a_{n-1}\cdot b_n \cdot a_n.
\end{equation}  

\Eqref{EQMult} induces a continuous map $\sAn_\gamma\times 
\sA^{\otimes n}_\gamma\to\sA$.
The whole discussion of this section circles around the problem of
extending \Eqref{EQMult} to a reasonable class of elements in
$\sAn_\pi$. The discussion would simplify considerably if
\Eqref{EQMult} would extend to a continuous map $\sAn_\pi\times
\sA_\pi^{\otimes n}\to \sA$. We do not know whether this is the case
as topologies on tensor products can behave notoriously pathologic.

%
\subsection{Smooth functional calculus on $\sAn_\gamma$}\label{SSSmoothFC}
%
For $a\in\sA$ put $A:= e^a$ and 
\begin{equation}\label{EQtensor1}\begin{split}
  a^{(j)} & = (1_\sA,\ldots,1_\sA,a,1_\sA,\ldots,1_\sA), \quad 0\le
  j\le n \quad \text{($a$ is in the $j$--th slot)}, \\
     \nabla_a^{(j)} &:= - a^{(j-1)} + a^{(j)}, \quad 1\le j \le n,\\
     \Delta_a^{(j)}&:= \exp(\nabla_a^{(j)}), \quad 1\le j\le n, \\
        & = (1_\sA,\ldots,1_\sA, A\ii,A,1_\sA,\ldots,1_\sA), \quad
        \text{($A\ii$ is in slot $j-1$)}.
\end{split}\end{equation} 
Note that slots are enumerated starting from $0$, so $a^{(0)}=a\otimes
1_\sA\otimes\ldots, a^{(1)}=1_\sA\otimes a\otimes
1_\sA\otimes\ldots$, etc.

The operators $a\pup 0;, \ldots, a\pup n;, \nabl 1;_a, \ldots, \nabl
n;_a, \modul 1;_a,\ldots, \modul n;_a$ commute. If $a$ is selfadjoint
then so are $a\pup j;, \nabl j;_a, \modul j;_a$. Furthermore, if $a$
is selfadjoint then $A\pup j;:=\exp(a\pup j;)$ is positive. 

The following simple identities are at the heart of the
Rearrangement Lemma:
\begin{align}
   A\pup j; & = (1_\sA,\ldots,1_\sA,A,1_\sA,\ldots,1_\sA) \nonumber\\
            & = (A A\ii,\ldots,AA\ii,A,1_\sA,\ldots)\nonumber \\
            & = A\pup 0; \modul 1;\cldots \modul j;,\quad  j\ge
            1, \label{EQTREAR5}  \\
  a\pup j;  &= a\pup 0;+\nabl 1;_a+\ldots + \nabl j;_a,
                \quad  j\ge 1. 
              \label{EQTREAR6}   
\end{align}

From now on assume that $a\in\sA$ is selfadjoint and let $\Phi:C(\spec
a)\to \sA, f\mapsto f(a)$ denote the spectral measure of $a$. The
$(n+1)$--fold tensor product, $\Phi_\pi$, is a $*$--isomorphism from
$C((\spec a)^{n+1})\simeq C(\spec a)_\pi^{\otimes n+1}$ onto the
unital $C^*$--subalgebra $C^*(I, a\pup 0;,\ldots, a\pup n;)$ of $\sAn_\pi$
generated by $a\pup 0;,\ldots, a\pup n;$. $\Phi_\pi$ is nothing
but the joint spectral measure of the commuting operators $a\pup 0;,\ldots,
a\pup n;$, \eg $\Phi_\pi(f)=f(a\pup 0;,\ldots, a\pup n;)$.
Furthermore, this $C^*$--algebra also contains the operators $ \nabl
1;_a, \ldots, \nabl n;_a,$ and $ \modul 1;_a,\ldots, \modul n;_a$. 

If we view the operators \Eqref{EQtensor1}
as elements of $\sAn_\gamma$ they still admit a joint
analytic functional calculus \cite{Tay1970}. We do not make use,
however, of this celebrated and somewhat demanding paper.
Instead we exploit the nuclearity of Fr{\'e}chet spaces of smooth
functions to establish a smooth functional calculus with values in
$\sAn_\gamma$.  To this end let $U\supset \spec a$ be an open set. Then the
algebra of smooth functions, $\cinf{U}$,  on $U$ with the usual
Fr{\'e}chet topology is known to be nuclear \cite[Sec.~51]{Tre:TVS}. Thus the
injective tensor product $\cinf{U}^{\otimes n+1}_\eps$ is isomorphic
to the projective tensor product $\cinf{U}^{\otimes n+1}_\gamma$. The
map $f_0\oldots f_n\mapsto \bl x\mapsto
f_0(x_0)f_1(x_1)\cldots f_n(x_n)\in\cinf{U^{n+1}}\br $ is known to
extend by continuity to an isomorphism
$\cinf{U}\otimes_\eps^{n+1}\simeq\cinf{U^{n+1}}$, hence by nuclearity
it also extends to an isomorphism $\cinf{U}\otimes_\gamma^{n+1}\simeq
\cinf{U^{n+1}}$.  The following commutative diagram summarizes these
considerations:
\[
\xymatrix{ \cinf{U^{n+1}}\ar[d]_{j_U}
\ar[r]^-{\Phi_\gamma} & \sAn_\gamma \ar[d]^{\prgp}\\
        C\bl(\spec a)^{n+1}\br \ar[r]^-{\Phi_\pi} &\sAn_\pi .}
\]
The horizontal arrows are continuous $*$--homomorphisms which on
elementary tensors are given by $f_0\oldots f_n\mapsto
f_0(a)\otimes\ldots \otimes f_n(a)$, the vertical arrows are 
$j_U(f):=f\restr{(\spec a)^{n+1}}\in C((\spec a)^{n+1})$
resp. the natural map from the projective to the maximal $C^*$--tensor
product. 
\mpar{take projective limit and define $\Phi_\gamma$ on
$C^\infty(\spec)$}

\begin{remark}[\textit{Schwartz functions, entire functions}]\label{ssTen4}
\sitem
We note in addition that a functional calculus for, say, Schwartz
functions can be set up in a more elementary way by the Fourier
transform. Namely, observe that for $\xi\in\R^{n+1}$ we have
\begin{equation*}
   \exp\bl i\xi_0 a\pup 0;+\ldots + i\xi_n a\pup n;\br
    = e^{i\xi_0 a}\otimes e^{i \xi_1 a}\oldots
    e^{i\xi_na},
\end{equation*}
and therefore, since $\|\cdot\|_\gamma$ is a cross-norm 
\begin{equation*}
   \Bigl\|
   \exp\bl i\xi_0 a\pup 0;+\ldots + i\xi_n a\pup n;\br
   \Bigr\|_\gamma
   = \bigl\| e^{i\xi_0 a}\bigr\|\cldots \bigl\|e^{i\xi_n a}\bigr\| \le
   1.
\end{equation*}
Thus for functions with integrable Fourier transform, e.~g. Schwartz
functions, we have 
\begin{equation}\label{EQten9}
  \Phi_\gamma(f):=f_\gamma(a\pup 0;,\ldots , a\pup n;)=
  \int_{\R^{n+1}} \widehat f(\xi) 
    \exp\bl i \inn{ \xi, a\pup \cdot;}\br \,\dsl\xi,
\end{equation}
where $\inn{ \xi, a\pup\cdot;}$ is an abbreviation for
$\xi_0 a\pup 0;+\ldots + \xi_n  a\pup n;$, and this integral
converges in $\sAn_\gamma$ in the Bochner sense. 

\sitem Finally, for an entire function $f(z)=\sum_{\ga} f_\ga z^\ga$
in $n+1$ variables $z=(z_0,\ldots, z_n)$, of course, 
$\Phi_\gamma(f)=f_\gamma(a\pup 0;,\ldots,a\pup n;)$ is given by the convergent series
obtained by inserting $a\pup j;$ for $z_j$.
\end{remark}

\begin{theorem}\label{TProduct}
Let $\sA$ be a unital $C^*$--algebra and let $a\in\sA$ be a
selfadjoint element. Furthermore, let $U\supset \spec a$ be an open
neighborhood of $\spec a$. 

\sitem There is a unique continuous unital $*$--homomorphism
$\Phi_\gamma:\cinf{U^{n+1}}\simeq \cinf{U}^{\otimes n+1}_\gamma \to
\sAn_\gamma$ sending $f_0\otimes\ldots \otimes f_n$ to
$f_0(a)\otimes_\gamma\ldots\otimes_\gamma f_n(a)$. $\Phi_\gamma$
is compatible with the spectral measure of $a$ in the sense that
$\prgp(\Phi_\gamma(f))= f(a\pup 0;,\ldots, a\pup n;)$.
We therefore write $f_\gamma(a\pup0;,\ldots, a\pup
n;)$ for $\Phi_\gamma(f)$.

For $f\in\cinf{U^{n+1}}$ the element 
$f_\gamma(a\pup 0;,\ldots, a\pup n;)=\Phi_\gamma(f)\in\sAn_\gamma$
depends only on $f$ in an arbitrarily small open neighborhood
of $(\spec a)^{n+1}$. In particular, for $f$ one may therefore choose
a Schwartz function $\tilde f$ with $\tilde f\equiv f$ in such a
neighborhood. Then $f_\gamma(a\pup 0;,\ldots, a\pup n;)=
\tilde f_\gamma(a\pup 0;,\ldots, a\pup n;)$ which can be calculated by
the integral \Eqref{EQten9}.
\sitem The map
\begin{multline*}
\cinf{U^{n+1}}\times \sA^{\otimes n}_\gamma\longrightarrow \sA,\\
     (f, b_1\oldots b_{n})\mapsto
     \mu_n\bl f_\gamma(a\pup 0;,\ldots, a\pup n;) \cdot
       (b_1\oldots b_{n}\otimes 1_\sA)\br
\end{multline*}
is the unique continuous linear map sending
$(f_0\oldots f_n, b_1\otimes\ldots \otimes b_{n})$
to $f_0(a)\cdot b_1\cdot f_1(a) \cdot b_2\cldots b_n\cdot f_n(a)$.
\end{theorem}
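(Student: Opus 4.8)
The plan is to construct $\Phi_\gamma$ from the universal property of the (completed) projective tensor product and then to read off both assertions from the behaviour on elementary tensors together with density and continuity. For part (i), I would first observe that the single--variable smooth functional calculus $\cinf{U}\to\sA$, $f\mapsto f(a)$, is the composition of the restriction $\cinf{U}\to C(\spec a)$ with the continuous functional calculus $C(\spec a)\to\sA$, hence is a continuous unital $*$--homomorphism. Since $\|\cdot\|_\gamma$ is a cross--norm, the $(n+1)$--linear map $\sA^{n+1}\ni(a_0,\ldots,a_n)\mapsto a_0\oldots a_n\in\sAn_\gamma$ is bounded, so the composite $(n+1)$--linear map $(f_0,\ldots,f_n)\mapsto f_0(a)\oldots f_n(a)$ is continuous; as $\sAn_\gamma$ is a Banach space it factors through a unique continuous linear map $\cinf{U}^{\otimes n+1}_\gamma\to\sAn_\gamma$, which under the nuclearity identification $\cinf{U}^{\otimes n+1}_\gamma\simeq\cinf{U^{n+1}}$ established in Sec.~\ref{SSSmoothFC} is the desired $\Phi_\gamma$. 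That $\Phi_\gamma$ is a unital $*$--homomorphism is checked on elementary tensors --- where multiplicativity, unitality and $*$--compatibility reduce to the corresponding properties of $f\mapsto f(a)$ --- and then propagated by density, using that multiplication and the adjoint are continuous on $\sAn_\gamma$ and that elementary tensors are dense; the same density argument gives uniqueness. For the compatibility with the spectral measure, $\prgp\circ\Phi_\gamma$ and $\Phi_\pi\circ j_U$ are continuous $*$--homomorphisms $\cinf{U^{n+1}}\to\sAn_\pi$ that agree on elementary tensors (both send $f_0\oldots f_n$ to the image of $f_0(a)\oldots f_n(a)$ under the joint spectral measure $\Phi_\pi$ of $a\pup 0;,\ldots,a\pup n;$), hence agree; this is precisely the asserted identity $\prgp(\Phi_\gamma(f))=f(a\pup 0;,\ldots,a\pup n;)$ and the commutativity of the diagram above.

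Next I would establish locality. If $f$ vanishes on an open neighbourhood $V$ of $(\spec a)^{n+1}$, choose (using compactness of $\spec a$) an open set $W$ with $\spec a\subset W$, $\overline{W}\subset U$ and $(\overline{W})^{n+1}\subset V$, together with a function $\chi\in\cinf{U}$ with $\chi\equiv 1$ near $\spec a$ and $\operatorname{supp}\chi\subset W$. Then $\Psi(x):=\chi(x_0)\cdots\chi(x_n)$ has $\operatorname{supp}\Psi\subset(\overline{W})^{n+1}\subset V$, so $f\Psi=0$, while $\Phi_\gamma(\Psi)=\chi(a)\oldots\chi(a)=1$ since $\chi(a)=1_\sA$. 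Applying the homomorphism property, $0=\Phi_\gamma(f\Psi)=\Phi_\gamma(f)\,\Phi_\gamma(\Psi)=\Phi_\gamma(f)$, so $\Phi_\gamma(f)$ depends only on the germ of $f$ along $(\spec a)^{n+1}$. Given an arbitrary $f$, choosing $\Psi$ as above but with $\Psi\equiv 1$ near $(\spec a)^{n+1}$ and compact support in $U^{n+1}$, the function $\tilde f:=f\Psi$ extended by zero is compactly supported and smooth, hence Schwartz, and agrees with $f$ near $(\spec a)^{n+1}$; therefore $\Phi_\gamma(f)=\Phi_\gamma(\tilde f)$. Finally, the map $g\mapsto\Phi_\gamma(g|_{U^{n+1}})$ and the map given by the Fourier integral \eqref{EQten9} are both continuous linear maps $\mathcal{S}(\R^{n+1})\to\sAn_\gamma$ that agree on products $g_0(x_0)\cdots g_n(x_n)$ --- for such a product \eqref{EQten9} factors into one--variable integrals, each equal to $g_j(a)$ --- and such products span a dense subspace of $\mathcal{S}(\R^{n+1})$ by nuclearity of the Schwartz space, so the two maps coincide and $\Phi_\gamma(\tilde f)$ is computed by \eqref{EQten9}.

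Part (ii) is then a short computation. Recall that $\mu_n$ extends to a $\gamma$--contraction $\sAn_\gamma\to\sA$; since $\beta\mapsto\beta\otimes 1_\sA$ is continuous linear $\sA^{\otimes n}_\gamma\to\sAn_\gamma$ and multiplication in the Banach algebra $\sAn_\gamma$ is continuous, the map $(f,\beta)\mapsto\mu_n\bl\Phi_\gamma(f)\cdot(\beta\otimes 1_\sA)\br$ from $\cinf{U^{n+1}}\times\sA^{\otimes n}_\gamma$ to $\sA$ is continuous and linear in each variable. On a pair of elementary tensors it equals, by \eqref{EQMult1} and the definition of $\Phi_\gamma$,
\[
\mu_n\bl\bl f_0(a)\oldots f_n(a)\br\cdot\bl b_1\oldots b_n\otimes 1_\sA\br\br=f_0(a)\cdot b_1\cdot f_1(a)\cdot b_2\cldots b_n\cdot f_n(a),
\]
the asserted value; uniqueness follows once more from density of elementary tensors in each factor together with continuity.

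The only genuine work, I expect, lies in handling the topological tensor--product formalism correctly --- invoking the universal property of the completed projective tensor product against the Banach target $\sAn_\gamma$, and the nuclearity identifications of Sec.~\ref{SSSmoothFC} --- rather than in any single hard estimate. The one mildly non--routine ingredient is the product--cutoff argument for locality and, closely related, the density of product Schwartz functions in $\mathcal{S}(\R^{n+1})$ needed to match $\Phi_\gamma$ with the explicit Fourier integral \eqref{EQten9}.
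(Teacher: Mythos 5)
Your proposal is correct and follows essentially the same route as the paper: the paper's proof simply defers to the nuclearity identification $\cinf{U}^{\otimes n+1}_\gamma\simeq\cinf{U^{n+1}}$ and the commutative diagram of Sec.~\ref{SSSmoothFC}, plus ``a simple partition of unity argument'' for the locality claim, and your product--cutoff construction with $\Psi(x)=\chi(x_0)\cdots\chi(x_n)$ is precisely that argument made explicit. The remaining steps (universal property of the projective tensor product, density of elementary tensors for uniqueness and for matching with the Fourier integral \eqref{EQten9}) are the details the paper leaves implicit, and you have filled them in correctly.
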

For the last map we therefore use, as defined 
in Sec.~\ref{ssContraction}, the shorthand notation
$f_\gamma(a\pup 0;,\ldots,a\pup n;)(b_1\cldots b_n)$.
\begin{proof} This Theorem just summarizes what we explained in 
so far in Sec~\ref{SSSmoothFC}. The last claim in 1. follows
from a simple partition of unity argument. \mpar{1. direct ref}
\end{proof}

\begin{remark}\label{Rem1}
We note that for a Schwartz function $f\in \sS(\R^{n+1})$
we have as a Bochner integral
\[\begin{split}
   f_\gamma(&a\pup 0;,\ldots , a\pup n;)(b_1\cldots b_n)=
   \int_{\R^{n+1}} \widehat f(\xi) \exp_\gamma\bl
      i\inn{ \xi, a\pup\cdot;\br}\br (b_1\cldots b_n) \,\dsl\xi\\
    & = 
    \int_{\R^{n+1}} \widehat f(\xi) e^{i\xi_0 a} b_1 e^{i \xi_1 a}
    b_2\cldots b_n e^{i\xi_n a} \,\dsl\xi,
\end{split}\]    
resp. for $f\in\sS(\R^n)$
\[\begin{split}
   f_\gamma(&\nabl 1;_a,\ldots , \nabl n;_a)(b_1\cldots b_n)=
      \int_{\R^n} \widehat f(\xi) \exp_\gamma\bl i\inn{ \xi , \nabl
    \cdot;_a}\br (b_1\cldots b_n) \,\dsl\xi\\
    & = 
    \int_{\R^n} \widehat f(\xi) e^{-i\xi_1 a} b_1 e^{i(\xi_1-\xi_2)a}
    b_2\cldots b_n e^{i\xi_n a} \,\dsl\xi.
\end{split}\]    
Here we have used
\begin{equation*}
   i\xi_1 \nabl 1;_a+\ldots +i\xi_n\nabl n;_a
   = 
   -i\xi_1 a\pup 0; +i (\xi_1-\xi_2)a\pup 1;+\ldots+
    i\xi_n a\pup n;,
\end{equation*}
which follows from \Eqref{EQtensor1}.
\end{remark}

\subsection{The Rearrangement Lemma}
We will need versions of Lemma \plref{LSpectralFubini} and Theorem
\plref{TOSL} for the smooth functional calculus in the
Banach--$*$--algebra $\sAn_\gamma$. For this the integrability
conditions \Eqref{EQIntegrability} and \eqref{EQIntegrability1}
have to be assumed for all partial derivatives of the involved
function.

\begin{theorem}[Smooth Operator Substitution Lemma]\label{TOSLSmooth} 
Let $\sA$ be a unital $C^*$--algebra and let $a\in\sA$ be a
selfadjoint element. Put $A:=e^a$ and let $U\supset \spec A$ be an
open neighborhood of $\spec A$.  
\sitem Let $f:\R_{\ge 0}\times U^{n+1}\to \C$ be a smooth function satisfying 
the following integrability
condition: for each compact subset $K\subset U$ and each multiindex
$\ga\in \N^{n+1}$
\begin{equation}\label{EQIntegrabilitySmooth}
   \int_0^\infty \sup_{\gl\in K} |\pl^\ga_\gl f(u,\gl)| \dint u <\infty.
\end{equation}   
Then $F(\gl):= \int_0^\infty f(u,\gl) \dint u$ defines a smooth function
on $U^{n+1}$, the integral $\int_0^\infty f_\gamma(u,A\pup
0;,\ldots,A\pup n;)\dint u$ exists as a Bochner integral with values in
$\sAn_\gamma$ and the integral equals $F_\gamma(A\pup 0;,\ldots,A\pup n;)$.

\sitem 
Let $f:\R_{\ge 0}^{n+1}\to\C$ be a smooth function such that for each pair of
positive real numbers $0<C_1<C_2$ and each multiindex 
$\ga\in \N^{n+1}$
\begin{equation}\label{EQIntegrabilitySmooth1}
\int_0^\infty \sup_{\substack{C_1\le s_j\le C_2\\ 0\le j\le n}}
|u^{|\ga|} (\pl^\ga f)(us)| \dint u <\infty.
\end{equation}
Then for the smooth functions $F(s)= \int_0^\infty f(u\cdot s) \dint u$
and $G(\gl) = \int_0^\infty f(u,u\gl_1,\ldots, u\gl_n) \dint u$
as in Theorem \ref{TOSL}  one has
\[\begin{split}
  \int_0^\infty &f_\gamma (u A\pup 0;,\ldots, u A\pup n;)\dint u
      = F_\gamma(A\pup 0;,\ldots,A \pup n; )\\
      & = A\ii G_\gamma(\modul 1;, \modul 1; \cdot\modul 2;,\ldots,\modul
      1;\cldots \modul n;)\\
      & = A\ii \int_0^\infty f_\gamma(u, u \modul 1;,u \modul
      1;\cdot\modul 2;,\ldots, u \modul 1;\cldots \modul n;) \dint u.
\end{split}\]
Both integrals exist in the Bochner sense in $\sAn_\gamma$ resp.
$\sA^{\otimes n}_\gamma$.
\end{theorem}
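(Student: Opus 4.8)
The plan is to re-run the proofs of Lemma~\plref{LSpectralFubini} and Theorem~\plref{TOSL}, with the continuous unital $*$--homomorphism $\Phi_\gamma\colon\cinf{U^{n+1}}\to\sAn_\gamma$ associated by Theorem~\plref{TProduct} to the selfadjoint element $A$ taking over the role played there by the $C^*$--functional calculus. For assertion~(1), first note that differentiation under the integral sign is legitimate: by~\eqref{EQIntegrabilitySmooth} the parameter integrals $\int_0^\infty\pl^\ga_\gl f(u,\gl)\,\dint u$ converge uniformly for $\gl$ in an arbitrary compact subset of $U^{n+1}$, so $F\in\cinf{U^{n+1}}$ with $\pl^\ga F=\int_0^\infty\pl^\ga_\gl f(u,\cdot)\,\dint u$; equivalently, the truncated integrals $\int_0^T f(u,\cdot)\,\dint u$ converge to $F$ as $T\to\infty$ in every continuous seminorm of the Fr\'echet space $\cinf{U^{n+1}}$. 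Since that space is complete this says precisely that $F=\int_0^\infty f(u,\cdot)\,\dint u$, the integral being taken in $\cinf{U^{n+1}}$.

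Next I would check Bochner integrability in the Banach space $\sAn_\gamma$. Continuity of $\Phi_\gamma$ furnishes a compact $K\subset U^{n+1}$, an integer $N$, and a constant $C$ with $\|\Phi_\gamma(g)\|_\gamma\le C\sum_{|\ga|\le N}\sup_{x\in K}|\pl^\ga g(x)|$ for all $g\in\cinf{U^{n+1}}$; by the first part of Theorem~\plref{TProduct} one may take $K=K_0^{\,n+1}$ with $K_0\subset U$ a compact neighbourhood of $\spec A$. Inserting $g=f(u,\cdot)$ and integrating over $u$, hypothesis~\eqref{EQIntegrabilitySmooth} gives $\int_0^\infty\|f_\gamma(u,A\pup 0;,\ldots,A\pup n;)\|_\gamma\,\dint u<\infty$; joint smoothness of $f$ makes $u\mapsto f(u,\cdot)\in\cinf{U^{n+1}}$, hence $u\mapsto f_\gamma(u,A\pup 0;,\ldots,A\pup n;)=\Phi_\gamma\bl f(u,\cdot)\br$, continuous, so the Bochner integral exists. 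Finally, since $\Phi_\gamma$ is bounded and linear it commutes with the vector-valued integral, and the preceding paragraph yields
\begin{multline*}
  \int_0^\infty f_\gamma(u,A\pup 0;,\ldots,A\pup n;)\,\dint u
  = \int_0^\infty \Phi_\gamma\bl f(u,\cdot)\br\,\dint u\\
  = \Phi_\gamma\Bl\int_0^\infty f(u,\cdot)\,\dint u\Br
  = \Phi_\gamma(F) = F_\gamma(A\pup 0;,\ldots,A\pup n;).
\end{multline*}

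Assertion~(2) would then follow from~(1) exactly as Theorem~\plref{TOSL} follows from Lemma~\plref{LSpectralFubini}. We may assume $U\subset\R_{>0}$. Put $g(u,s):=f(us)$ and $h(u,\gl):=f(u,u\gl_1,\ldots,u\gl_n)$; since $\pl^\ga_\gl g(u,\gl)=u^{|\ga|}(\pl^\ga f)(us)$ and any compact subset of $U^{n+1}$ lies in some cube $[C_1,C_2]^{n+1}$, hypothesis~\eqref{EQIntegrabilitySmooth1} implies that $g$ and $h$ both satisfy~\eqref{EQIntegrabilitySmooth} (for $h$ one puts $s_0=1$, which only enlarges the admissible range of the remaining $s_j$), so~(1) applies to each and in particular $F$ and $G$ are smooth. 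Applying~(1) to $g$, and using that $g_\gamma(u,A\pup 0;,\ldots,A\pup n;)=f_\gamma(uA\pup 0;,\ldots,uA\pup n;)$ — because the smooth functional calculus of $uA$ evaluated on $f$ equals that of $A$ evaluated on the rescaled function $s\mapsto f(us)$ — gives the first identity. For the remaining two, apply the homomorphism (spectral mapping) property of the functional calculus of the commuting tuple $(A\pup 0;,\ldots,A\pup n;)$ to the pointwise relation $F(s)=s_0\ii G(s_0\ii s_1,\ldots,s_0\ii s_n)$ established in the proof of Theorem~\plref{TOSL}; by~\eqref{EQTREAR5} one has $(A\pup 0;)\ii A\pup j;=\modul 1;\cldots \modul j;$, so this becomes $F_\gamma(A\pup 0;,\ldots,A\pup n;)=A\ii\,G_\gamma(\modul 1;,\modul 1;\modul 2;,\ldots,\modul 1;\cldots \modul n;)$, and a last application of~(1) to $h$ rewrites the right-hand side as $A\ii\int_0^\infty f_\gamma(u,u\modul 1;,u\modul 1;\modul 2;,\ldots,u\modul 1;\cldots \modul n;)\,\dint u$. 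All integrals converge in the Bochner sense in $\sAn_\gamma$, and hence, by continuity of the contraction map of Section~\ref{ssContraction}, in $\sA^{\otimes n}_\gamma$ after contracting by $b_1\cldots b_n$.

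The one step that is genuinely new compared with the $C^*$--setting of Lemma~\plref{LSpectralFubini} is the last step of the proof of~(1): because the natural map $\prgp\colon\sAn_\gamma\to\sAn_\pi$ need not be injective, the identity cannot be obtained simply by passing to the $C^*$--completion, and one must instead transport the elementary scalar identity $F=\int_0^\infty f(u,\cdot)\,\dint u$ through the continuous homomorphism $\Phi_\gamma$. This is exactly why the integrability in~\eqref{EQIntegrabilitySmooth}, \eqref{EQIntegrabilitySmooth1} is demanded for \emph{all} $\gl$--derivatives of $f$: those derivatives are what the Fr\'echet seminorm estimate for $\Phi_\gamma$ consumes, and controlling them simultaneously guarantees both the smoothness of $F$, $G$ and the convergence of the operator-valued integrals in $\sAn_\gamma$. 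Everything else is routine bookkeeping with the tensor identities~\eqref{EQTREAR5}, \eqref{EQTREAR6} and the scaling behaviour of the spectral measure, just as in Theorem~\plref{TOSL}.
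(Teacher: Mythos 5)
Your argument is correct and follows essentially the same route as the paper's own proof: interpret $\int_0^\infty f(u,\cdot)\,\dint u$ as a Bochner integral in the Fr\'echet space $\cinf{U^{n+1}}$, pull the continuous homomorphism $\Phi_\gamma$ through the integral for part (1), and reduce part (2) to part (1) via the auxiliary functions $g,h$ and the identities \eqref{EQTREAR5}. You merely make explicit the seminorm estimate for $\Phi_\gamma$ and the verification that \eqref{EQIntegrabilitySmooth1} implies \eqref{EQIntegrabilitySmooth} for $g$ and $h$, details the paper leaves implicit.
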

\begin{proof}
\begin{numbered}
\sitem
The integrability condition guarantees that the integral 
$\int_0^\infty f(u,\cdot) \dint u$ converges
as a Bochner integral with values in the Fr{\'e}chet space
$\cinf{U^{n+1}}$. Thus $F$ is smooth and integration 
commutes with continuous linear maps. Denote by $\Phi_\gamma$ the
$\sAn_\gamma$--valued spectral measure of $A\pup 0;,\ldots,A\pup n;$
according to Theorem \ref{TProduct}. Then
\[\begin{split}
   \int_0^\infty &f_\gamma(u,A\pup 0;,\ldots,A\pup n;)\dint u
      = \int_0^\infty \Phi_\gamma(f(u,\cdot)) \dint u\\
      & = \Phi_\gamma \Bl \int_0^\infty f(u,\cdot) \dint u \Br
      = F_\gamma(A\pup 0;,\ldots, A\pup n;).
\end{split}\]      
\sitem Let $g(u,s):=f(us), h(u,\gl):= f(u,u\gl_1,\ldots, u\gl_n)$ as
in the proof of Theorem \ref{TOSL}. Then by the integrability
condition the proven first part applies to both functions $g$ and $h$
and, taking into account the relations \Eqref{EQTREAR5}, the claim
follows as in the proof of Theorem \ref{TOSL}. 
\end{numbered}
\end{proof}

Together with Theorem \ref{TProduct} we obtain
as an immediate consequence:

\pagebreak[3]
\begin{cor}[Rearrangement Lemma]\label{TREAR}
Let $f_0,\ldots,f_p:\R_{\ge 0}\to\C$ be smooth functions such that
$f(x_0,\ldots,x_p):=\prod_{j=0}^p f_j(x_j)$ satisfies the
integrability condition \Eqref{EQIntegrabilitySmooth1} of the 
Smooth Operator
Substitution Lemma \ref{TOSLSmooth}. Furthermore, let $a$ be a selfadjoint
element of the unital $C^*$--algebra $\sA$, put $A:=e^a$.  Moreover,
denote by $\Delta\pup\cdot;, \nabla\pup\cdot;$ the operators defined
in \Eqref{EQtensor1}.  Then for $b_1,\ldots,b_p\in \sA$
\begin{equation}\label{EQTREAR1}\begin{split}
\int_0^\infty &f_0(u A)\cdot b_1\cdot f_1(u A)\cldots b_p\cdot f_p(uA)
\dint u \\ 
   &= A\ii 
     \int_0^\infty f_\gamma(u, u\modul 1;, u\modul 1;
        \modul 2;,\ldots, u\modul 1;\cldots\modul p;) \dint u 
        (b_1\cldots b_p)\\
   & = A\ii F_\gamma( \modul 1;, \modul 1; \modul 2;,\ldots, \modul 1;\cldots\modul p;) 
       (b_1\cldots b_p),
\end{split}\end{equation}
where the smooth function $F(s_1,\ldots,s_p)$ is
\[
  F(s) = \int_0^\infty f_0(u)\cdot f_1(us_1)\cldots f_p(u s_p) \dint u.
\]
\end{cor}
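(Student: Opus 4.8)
The plan is to reduce the assertion to the two results just proved, by applying the Smooth Operator Substitution Lemma~\ref{TOSLSmooth} to the product function $f(x_0,\ldots,x_p):=\prod_{j=0}^p f_j(x_j)$ and then pushing the resulting identity in $\sA^{\otimes p+1}_\gamma$ forward through the contraction map of Section~\ref{ssContraction}. First I would note that, since each $f_j$ is smooth on $\R_{\ge 0}$ and $A=e^a$ is positive and invertible, $\spec A$ is a compact subset of $(0,\infty)$; hence there is an open set $U$ with $\spec A\subset U\subset(0,\infty)$, the function $f$ is smooth on $\R_{\ge 0}^{p+1}\supset U^{p+1}$, and by hypothesis $f$ satisfies the integrability condition~\eqref{EQIntegrabilitySmooth1}. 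Feeding $f$ into the second part of Theorem~\ref{TOSLSmooth} with $n=p$ then yields, as an identity of Bochner integrals in $\sA^{\otimes p+1}_\gamma$,
\[\begin{split}
\int_0^\infty f_\gamma(uA\pup 0;,\ldots,uA\pup p;)\dint u
 &= A\ii\int_0^\infty f_\gamma(u, u\modul 1;, u\modul 1;\modul 2;,\ldots, u\modul 1;\cldots\modul p;)\dint u\\
 &= A\ii G_\gamma(\modul 1;, \modul 1;\modul 2;,\ldots, \modul 1;\cldots\modul p;),
\end{split}\]
where $G(\gl_1,\ldots,\gl_p)=\int_0^\infty f(u,u\gl_1,\ldots,u\gl_p)\dint u=\int_0^\infty f_0(u)f_1(u\gl_1)\cldots f_p(u\gl_p)\dint u$ is precisely the function $F$ of the statement, and $A\ii$ is to be read as the slot-$0$ element $(A\pup 0;)\ii=A\ii\otimes 1_\sA\oldots 1_\sA$; the two inner equalities above are just the relations~\eqref{EQTREAR5}.

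Next I would apply the contraction map $y\mapsto\mu_p\bl y\cdot(b_1\oldots b_p\otimes 1_\sA)\br$, which by Section~\ref{ssContraction} (see also the second part of Theorem~\ref{TProduct}) is a continuous linear map $\sA^{\otimes p+1}_\gamma\to\sA$ and hence commutes with the Bochner integral. For each fixed $u$ the integrand on the left is the elementary tensor $f_\gamma(uA\pup 0;,\ldots,uA\pup p;)=f_0(uA)\oldots f_p(uA)$, whose contraction by $b_1\cldots b_p$ equals $f_0(uA)\cdot b_1\cdot f_1(uA)\cldots b_p\cdot f_p(uA)$ by~\eqref{EQMult1}; integrating over $u$ then produces the left member of~\eqref{EQTREAR1} (and, incidentally, shows that this integral converges in the Bochner sense in $\sA$). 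On the right the factor $(A\pup 0;)\ii=A\ii\otimes 1_\sA\oldots 1_\sA$ occupies slot $0$ and commutes with everything, so it pulls out of the contraction from the left: $\mu_p\bl (A\pup 0;)\ii\, y\,(b_1\oldots b_p\otimes 1_\sA)\br=A\ii\,\mu_p\bl y\,(b_1\oldots b_p\otimes 1_\sA)\br$ for every $y\in\sA^{\otimes p+1}_\gamma$. Applying this to the two right-hand expressions above, and recalling the shorthand of Section~\ref{ssContraction}, reproduces the middle and right members of~\eqref{EQTREAR1} with $F=G$.

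I do not expect a genuine obstacle here: the whole content is that continuous linear maps commute with Bochner integrals, that a product function is an elementary tensor so that the explicit formula~\eqref{EQMult1} applies verbatim, and that the leftmost $A\ii$ produced by the Operator Substitution Lemma is the slot-$0$ element $(A\pup 0;)\ii$, which therefore factors out. The one point requiring care is precisely this bookkeeping around~\eqref{EQTREAR5}---the rewriting of $A\pup j;$ as $A\pup 0;\,\modul 1;\cldots\modul j;$ and the attendant appearance of $A\ii$ out front---so that, once it is kept straight, the three expressions in~\eqref{EQTREAR1} are literally the images under the contraction map of the three expressions in the second part of Theorem~\ref{TOSLSmooth}.
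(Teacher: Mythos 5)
Your proposal is correct and is exactly the argument the paper intends: the paper states the corollary as "an immediate consequence" of the Smooth Operator Substitution Lemma~\ref{TOSLSmooth} together with Theorem~\ref{TProduct}, and your write-up simply makes explicit the same two steps (apply Theorem~\ref{TOSLSmooth}(2) to the elementary-tensor function $\prod_j f_j(x_j)$, then push the resulting identity in $\sA^{\otimes p+1}_\gamma$ through the continuous contraction map, which commutes with the Bochner integral and lets the slot-$0$ factor $A\ii$ pull out). The bookkeeping via \eqref{EQTREAR5} and \eqref{EQMult1} is handled correctly.
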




\begin{example}\label{ExREAR}
We continue Example \ref{ExOSL} and put
\[\begin{split}
   f_0(x)&:= x^\nu (1+x)^{-\ga_0-1},\\
   f_j(x)&:= (1+x)^{-\ga_j-1}.
\end{split}\]
Then Corollary \ref{TREAR} applies and we recover
the Rearrangement Lemma of Connes-Moscovici \cite[Lemma
6.2]{ConMos2011}.
\end{example}

\subsection{Noncommutative Taylor expansion in terms of divided
differences}
\label{SSecMagnus}

Given selfadjoint elements $a,b$ of the unital $C^*$--algebra $\sA$.
We recast the noncommuatative Taylor expansion formula 
(\textit{cf., e.g.}, \cite[Sec.~6.1]{ConMos2011})
for $\exp(a+b)$ in light of the functional calculus summarized in
Theorem \ref{TProduct} and the Genocchi-Hermite formula
\Eqref{EQHermite} for divided differences.  The main facts about
divided differences are summarized in Appendix \ref{SecDD} below. 

The expansional formula for the exponential function reads
\begin{equation}\label{EQExpansion}
e^{a+b} = e^a + \sum_{n=1}^\infty
     \int_{0\le s_n\le \ldots \le s_1\le 1} 
       e^{(1-s_1)a}\cdot b\cdot e^{(s_1-s_2)a}\cdot b\cldots b\cdot
       e^{s_na} \dint s.
\end{equation}
The integrand equals, \cf Remark \ref{ssTen4} and Remark \ref{Rem1},
\[
    \exp_\gamma\bl (1-s_1) a\pup 0;+(s_1-s_2) a\pup 1;+\ldots+ s_n a\pup
    n;\br (b\cldots b).
\]
Applying the Genocchi-Hermite formula \Eqref{EQHermite}
to the exponential function we have
\begin{multline*}
 \int_{0\le s_n\le \ldots \le s_1\le 1} 
    \exp_\gamma 
    {\textstyle\bl (1-s_1) a\pup 0;+(s_1-s_2) a\pup 1;+\ldots+ s_n a\pup
    n;\br} \dint s_1\ldots \dint s_n \\
    = [a\pup 0;,\ldots , a\pup n;] \exp_\gamma.
\end{multline*}
In other words the general term in the expansion formula
\Eqref{EQExpansion} can be reinterpreted as follows: take the
commuting selfadjoint operators $a\pup 0;,\ldots,a\pup n;$ and insert
them into the multivariable function $x\mapsto \DDots x_0,x_n;\exp$,
the $n^{\text th}$ divided difference of the exponential function. 
Then contract with the $n$-fold tensor product $b\otimes\ldots \otimes
b$.

Therefore, the formula \Eqref{EQExpansion} may be rewritten in
the very compact way
\begin{align}
e^{a+b} &= \sum_{n=0}^\infty \bl[a\pup 0;,\ldots,a\pup n;] \exp_\gamma
\br(b\cldots b)\label{EQExp1a}\\
        &= \sum_{n=0}^\infty e^a \bl[0,\nabl 1;_a, 
        \nabl 1;_a+\nabl 2;_a,\ldots,\nabl 1;_a+\ldots+\nabl n;_a] 
        \exp_\gamma \br(b\cldots b).\label{EQExp1b}
\end{align}
In the second line we have used the functional equation of $\exp$, the
homogeneity of the divided differences (\textit{cf.}~\Eqref{EQDD1}), and the
relations \Eqref{EQTREAR6}.  In a different context it was also
observed in \cite{BaxBru2011} that the expansion formula
\Eqref{EQExpansion} can be interpreted in terms of the
Genocchi-Hermite formula.
We obtain a straightforward generalization of \Eqref{EQExp1a},
\eqref{EQExp1b} to arbitrary smooth functions.

\begin{prop}\label{PMagnus} Let $a\in\sA$ be selfadjoint.  Then for a smooth
function $f$ in a neighborhood of $\spec a$ the Taylor
expansion of $f(a+b)$ for selfadjoint $b\sim 0$ is given by
\[
f(a+b) \sim_{b\to 0} \sum_{n=0}^\infty 
\bl[a\pup 0;,\ldots,a\pup n;]  f_\gamma \br(b\cldots b).
\]
\end{prop}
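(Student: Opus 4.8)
The plan is to reduce the statement to the exponential case \eqref{EQExp1a} by means of the Fourier transform. As a preliminary remark, both sides of the asserted expansion are local in $f$: for $\|b\|$ small $\spec(a+b)$ lies in any prescribed neighbourhood of $\spec a$, while by Theorem \ref{TProduct}(1) the element $[a\pup 0;,\ldots,a\pup n;]f_\gamma\in\sAn_\gamma$ --- obtained by applying $\Phi_\gamma$ to the function $x\mapsto[x_0,\ldots,x_n]f$, which is smooth on a neighbourhood of $(\spec a)^{n+1}$ by the Genocchi--Hermite formula \eqref{EQHermite} --- depends only on the germ of $f$ near $\spec a$. Hence one may assume $f\in\sS(\R)$ from the outset (replace $f$ by $\chi f$ with a cutoff $\chi\equiv 1$ near $\spec a$), and then write $f(a+b)=\int_\R\widehat f(\xi)\,e^{i\xi(a+b)}\dsl\xi$, an absolutely convergent Bochner integral in $\sA$, since $\widehat f\in L^1$ and $\|e^{i\xi(a+b)}\|=1$ because $a+b$ is selfadjoint.

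Next I would feed the expansional formula \emph{with remainder} into the integrand. For fixed $\xi$ and any $N\ge 0$, truncating the expansional formula \eqref{EQExpansion} applied with $i\xi a$ in place of $a$ and $i\xi b$ in place of $b$ after the $N$-th term and writing the tail in Duhamel's remainder form gives
\[
  e^{i\xi(a+b)}=\sum_{n=0}^{N}(i\xi)^{n}\int_{0\le s_n\le\ldots\le s_1\le 1} e^{i\xi(1-s_1)a}\cdot b\cdot e^{i\xi(s_1-s_2)a}\cdot b\cldots b\cdot e^{i\xi s_n a}\,\dint s \;+\;\varrho_N(\xi),
\]
where $\varrho_N(\xi)$ is the analogous $(N+1)$-fold simplex integral in which the last exponential carries $X+Y=i\xi(a+b)$ in place of $X=i\xi a$. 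Since $a$ and $a+b$ are selfadjoint, every exponential occurring in $\varrho_N(\xi)$ is unitary, so $\|\varrho_N(\xi)\|\le |\xi|^{N+1}\|b\|^{N+1}/(N+1)!$. On the other hand, exactly as in the derivation of \eqref{EQExp1a} --- the Genocchi--Hermite formula \eqref{EQHermite} applied to $g_\xi(x):=e^{i\xi x}$, followed by contraction with $b\cldots b$, \cf Remark \ref{Rem1} --- the $n$-th term of the sum equals $\bl[a\pup 0;,\ldots,a\pup n;](g_\xi)_\gamma\br(b\cldots b)$.

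Finally I would integrate against $\widehat f(\xi)\dsl\xi$. Since $\Phi_\gamma$, the formation of divided differences, and the contraction map $\mu_n$ are continuous, they commute with Bochner integration; using $[x_0,\ldots,x_n]f=\int_\R\widehat f(\xi)\,[x_0,\ldots,x_n]g_\xi\,\dsl\xi$ (valid by the rapid decay of $\widehat f$), the truncated sum integrates to $\sum_{n=0}^{N}\bl[a\pup 0;,\ldots,a\pup n;]f_\gamma\br(b\cldots b)$, while the remainder contributes at most $\tfrac{\|b\|^{N+1}}{(N+1)!}\int_\R|\widehat f(\xi)|\,|\xi|^{N+1}\dsl\xi$, which is finite because $\widehat f$ is Schwartz. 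This yields $\bigl\|\,f(a+b)-\sum_{n=0}^{N}\bl[a\pup 0;,\ldots,a\pup n;]f_\gamma\br(b\cldots b)\,\bigr\|=O(\|b\|^{N+1})$ as $b\to 0$, for every $N$, which is the assertion. I expect the only genuinely delicate point to be the bookkeeping of the expansional remainder: one must retain a \emph{single} factor $e^{i\xi s(a+b)}$ in $\varrho_N(\xi)$ rather than estimate the Dyson tail term by term, since it is precisely the unitarity of that factor --- and not a bound of the form $e^{|\xi|\,\|a\|}$, which would be useless against a merely Schwartz $\widehat f$ once integrated in $\xi$ --- that produces the required $O(\|b\|^{N+1})$ control.
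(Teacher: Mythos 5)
Your proof follows essentially the same route as the paper's: reduce to a Schwartz function via Theorem \ref{TProduct}(1), write $f(a+b)=\int\widehat f(\xi)e^{i\xi(a+b)}\dsl\xi$, apply the expansional formula to each $e^{i\xi(a+b)}$, and identify the $n$-th term through the Genocchi--Hermite formula. The one genuine addition is your Duhamel-form remainder $\varrho_N(\xi)$ with the unitarity bound $\|\varrho_N(\xi)\|\le|\xi|^{N+1}\|b\|^{N+1}/(N+1)!$, which supplies the $O(\|b\|^{N+1})$ control justifying the symbol $\sim_{b\to 0}$ --- a point the paper's proof leaves implicit --- and it is carried out correctly.
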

\begin{remark}\sitem
Note that if $\sA=\C$ and hence $a, b$ are real numbers then
$\sAn$ is canonically isomorphic to $\C$ and under this isomorphism
$\bl[a\pup 0;,\ldots,a\pup n;]  f_\gamma \br(b\cldots b)$
corresponds to $\frac 1{n!} f^{(n)}(a) b^n$, see
\Eqref{EQDD4}, and the Proposition just gives  the ordinary Taylor
formula.

\sitem
The formula in Prop. \ref{PMagnus} is equivalent to the noncommutative
Taylor expansion formula derived in \cite{Pay2011} in the context of
formal power series. This expansion was in fact discovered earlier
by Daletskii \cite{Dal1990}. We plan to discuss such expansions and
its relations to a noncommutative Newton interpolation formula in
more detail in the near future.
\end{remark}

\begin{proof} W.~l.~o.~g.~we may assume that $f$ is a Schwartz function on
$\R$, \cf Theorem \ref{TProduct}, 1. Write
\[
  f(a+b)=\int_{\R} \widehat f(\xi) e^{i\xi (a+b)} \,\dsl \xi.
\]  
Then apply the expansion formula \Eqref{EQExpansion} to the
exponential term
\[
e^{i\xi(a+b)} = e^{i\xi a}+
     \sum_{n=1}^\infty (i\xi)^n 
     \bl[i\xi a\pup 0;,\ldots,i\xi a\pup n;]  \exp_\gamma \br(b\cldots b).
\]
Noting that $(i\xi)^n\widehat f(\xi)= \widehat{f^{(n)}}(\xi)$
the $n$-th term (with the $b$'s omitted) equals
\begin{multline*}
 \int_{0\le s_n\le \ldots \le s_1\le 1} 
 f_\gamma^{(n)} \bl 
 (1-s_1) a\pup 0;+(s_1-s_2) a\pup 1;+\ldots+ s_n a\pup
    n;\br \dint s_1\ldots ds_n \\
    = [a\pup 0;,\ldots , a\pup n;] f_\gamma,
\end{multline*}    
where Genocchi-Hermite's formula 
\Eqref{EQHermite} was used.
\end{proof}

\begin{example} Let $a(s,t)\in\sA$ be a smooth selfadjoint family
with $a(0,0)=a$.
Put $\delta_1 a:=\pl_s\restr{s=0}a(s,0), \delta_2 a:=\pl_t\restr{t=0}
a(0,t)$, and $\delta_1\delta_2 a:=\pl_s\pl_t\restr{s=t=0}a(s,t)$.
Then
\begin{align}
\pl_s\restr{s=0} f(a(s,0))
    & = ([a\pup 0;, a\pup 1;] f_\gamma)(\delta_1 a)\\
\pl_s\pl_t\restr{s=t=0} f(a(s,t)) & =
     ( [a\pup 0;, a\pup 1;] f_\gamma)(\delta_1\delta_2 a)+ \\
      &\qquad
         + ([a\pup 0;,a\pup 1;,a\pup 2;]f_\gamma)(\delta_1 a\delta_2 a+
         \delta_2 a\delta_1 a).\nonumber
\end{align}         
Taking into account \Eqref{EQExp1b} we obtain for the exponential
function
\begin{align}
e^{-a}\pl_s\restr{s=0} e^{a(s,0)}
    & = ([0, \nabl 1;_a] \exp_\gamma)(\delta_1 a)\\
e^{-a}\pl_s\pl_t\restr{s=t=0} e^{a(s,t)} & =
     ( [0,\nabl 1;_a] \exp_\gamma)(\delta_1\delta_2 a)+\\
      &\qquad
         + ([0,\nabl 1;_a, \nabl 1;_a+\nabl 2;_a]\exp_\gamma)(\delta_1 a\delta_2 a+
         \delta_2 a\delta_1 a).\nonumber 
\end{align}         
Note that
\begin{align}
   [0,s]\exp     & = \frac{e^s-1}{s},\\
   [0,s,s+t]\exp & = \frac{e^{s+t}s+t-e^s(s+t)}{s t (s+t)}.
\end{align}
One should compare this to \cite[(21)]{ConTre2011},
\cite[(167)-(169)]{ConMos2011}, and \cite[Lemma 5.1]{FarKha2013}.
\end{example}

\subsection{Expansion formulas for $\nabla_a$}
\label{SSecNabla}

Recall from \Eqref{EQtensor1}
$\nabla_a:=\nabla_a^{(1)}=-a\otimes 1_\sA + 1_\sA\otimes a\in \sA\otimes \sA$.
To expand $f(\nabla_{a+b})$ we therefore  have to apply the expansion 
of Proposition \ref{PMagnus} in the algebra $\tilde\sA:=\sA\otimes_\gamma\sA$. 
Denote for $c\in\tilde\sA$, analogously to 
\Eqref{EQtensor1},
\[\begin{split}
  \tilde c^{(j)} & = (1_\stA,\ldots,1_\stA,c,1_\stA,\ldots,1_\stA), 
       \quad 0\le j\le n \quad \text{($c$ is in the $j$--th slot)}, \\
       \tilde\nabla_a^{(j)} &:= \bl\nabla_a\br^{(j)}
          = (1_\stA,\ldots,1_\stA,\nabla_a,1_\stA,\ldots,1_\stA)
              , \quad 0\le j \le n.
\end{split}\] 
\begin{lemma}\label{LNabla1}
Let $f\in\sS(\R^{n+1})$ be a Schwartz function, let
$b_j'\otimes b_j''\in\stA, j=1,\ldots,n$, and let $x\in\sA$ be given.
Note that $f_\gamma(\tnabl 0;_a,\ldots,\tnabl n;_a)\in\stAn_\gamma$.
After contraction with $(b_1'\otimes b_1'')\otimes \ldots\otimes
(b_n'\otimes b_n'')$ one obtains an element of $\stA$ which
can be contracted further with $x\in\sA$ to an element of $\sA$.
For this element we have
\begin{align*}
\bl f_\gamma(&\tnabl 0;_a,\ldots,\tnabl n;_a)(b_1'\otimes b_1''\cldots b_n'\otimes
b_n'')\br(x)\nonumber\\
&= f_\gamma(-a\pup 0; +a\pup n+1;, -a\pup 1; + a\pup n+2;,\ldots, -a\pup n;
   +a\pup 2n+1;)\\
   &\qquad (b_1'\cldots b_n'\cdot x\cdot b_1''\cldots b_n'')\nonumber\\
& = f_\gamma(\nabl 1;_a+\ldots+ \nabl n+1;_a,
      \nabl 2;_a+\ldots+ \nabl n+2;_a,\ldots,
      \nabl n+1;_a+\ldots+ \nabl 2n+1;_a)\\
      &\qquad 
      (b_1'\cldots b_n'\cdot x\cdot b_1''\cldots b_n'').\nonumber
\end{align*}      
\end{lemma}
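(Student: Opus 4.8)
The plan is to use the Fourier transform to reduce the identity to the case of a single plane wave, and then to verify it on plane waves by an explicit bookkeeping of tensor slots. Since $f\in\sS(\R^{n+1})$ and $\nabla_a$ is selfadjoint in $\stA$, Remark \ref{ssTen4}, applied to the $C^*$--algebra $\stA$ with its selfadjoint element $\nabla_a$, represents the left hand side as the absolutely convergent Bochner integral
\[
   f_\gamma(\tnabl 0;_a,\ldots,\tnabl n;_a)
   = \int_{\R^{n+1}}\widehat f(\xi)\,\exp_\gamma\bl i\inn{\xi,\tnabl \cdot;_a}\br\,\dsl\xi
   \quad\text{in }\stAn_\gamma
\]
(convergence because $\|\exp_\gamma(i\inn{\xi,\tnabl \cdot;_a})\|_\gamma=\prod_j\|e^{i\xi_j\nabla_a}\|\le 1$ and $\widehat f\in L^1$). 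On the right hand side $f_\gamma$ is evaluated at the commuting selfadjoint operators $-a\pup j;+a\pup n+1+j;$, $0\le j\le n$, of $\sA^{\otimes 2n+2}_\gamma$; reading this --- exactly as $F_\gamma$ of the products of modular operators is read in Theorem \ref{TOSLSmooth} and Corollary \ref{TREAR} --- via the spectral map of Theorem \ref{TProduct} and the linear substitution $y\mapsto(-y_0+y_{n+1},\ldots,-y_n+y_{2n+1})$, and using again that $a$ is selfadjoint, one obtains likewise
\[
   f_\gamma(-a\pup 0;+a\pup n+1;,\ldots,-a\pup n;+a\pup 2n+1;)
   = \int_{\R^{n+1}}\widehat f(\xi)\,\exp_\gamma\Bl i\sum_{j=0}^n\xi_j\bl -a\pup j;+a\pup n+1+j;\br\Br\,\dsl\xi .
\]
The two contraction operations appearing in the statement --- contraction with $(b_1'\otimes b_1'')\oldots(b_n'\otimes b_n'')$ followed by contraction with $x$ on the left, and contraction with $b_1'\oldots b_n'\otimes x\otimes b_1''\oldots b_n''$ on the right --- are continuous linear maps (see \Eqref{EQMult} and Sec.~\ref{ssContraction}), hence commute with these Bochner integrals. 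It therefore suffices to prove the asserted identity with $f$ replaced by the plane wave $y\mapsto e^{i\inn{\xi,y}}$, for each fixed $\xi\in\R^{n+1}$.

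For a plane wave everything is explicit. By Remark \ref{ssTen4} one has $\exp_\gamma(i\inn{\xi,\tnabl \cdot;_a})=\bigotimes_{j=0}^n e^{i\xi_j\nabla_a}$, and since $-a\otimes 1_\sA$ and $1_\sA\otimes a$ commute, $e^{i\xi_j\nabla_a}=e^{-i\xi_j a}\otimes e^{i\xi_j a}$; thus the left hand integrand is the elementary tensor $\bigotimes_{j=0}^n(e^{-i\xi_j a}\otimes e^{i\xi_j a})$ of $\stAn$. Contracting it with $(b_1'\otimes b_1'')\oldots(b_n'\otimes b_n'')$ according to \Eqref{EQMult1} and multiplying out in $\stA=\sA\otimes\sA$ gives the elementary tensor $\bl e^{-i\xi_0 a}b_1'e^{-i\xi_1 a}\cldots b_n'e^{-i\xi_n a}\br\otimes\bl e^{i\xi_0 a}b_1''e^{i\xi_1 a}\cldots b_n''e^{i\xi_n a}\br$ of $\stA$, and a further contraction with $x$ yields
\[
   e^{-i\xi_0 a}\,b_1'\,e^{-i\xi_1 a}\cldots b_n'\,e^{-i\xi_n a}\cdot x\cdot e^{i\xi_0 a}\,b_1''\,e^{i\xi_1 a}\cldots b_n''\,e^{i\xi_n a}\in\sA .
\]
On the other side, again by Remark \ref{ssTen4}, $\exp_\gamma\bl i\sum_{j=0}^n\xi_j(-a\pup j;+a\pup n+1+j;)\br$ is the elementary tensor $e^{-i\xi_0 a}\oldots e^{-i\xi_n a}\otimes e^{i\xi_0 a}\oldots e^{i\xi_n a}$ of $\sA^{\otimes 2n+2}$, and contracting it with $b_1'\oldots b_n'\otimes x\otimes b_1''\oldots b_n''$ according to \Eqref{EQMult1} --- now with $2n+1$ contraction slots --- produces exactly the same element of $\sA$. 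This establishes the first identity; the second one is obtained from it by substituting $-a\pup j;+a\pup n+1+j;=\nabl j+1;_a+\ldots+\nabl j+n+1;_a$, which is the instance of the relation \Eqref{EQTREAR6} in $\sA^{\otimes 2n+2}_\gamma$.

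The step that needs the most care is not the computation but the bookkeeping it encodes: one is implicitly invoking the canonical isometric $*$--isomorphism $\stAn_\gamma\simeq\sA^{\otimes 2n+2}_\gamma$ that reassociates the tensor factors and moves the ``second slots'' of each copy of $\stA=\sA\otimes\sA$ past all the ``first slots'' --- under which $\tnabl j;_a$ corresponds to $-a\pup j;+a\pup n+1+j;$ --- together with the matching of the two--stage contraction on the left with the single contraction on $\sA^{\otimes 2n+2}_\gamma$ on the right, and with the compatibility of the smooth functional calculus of Theorem \ref{TProduct} with both. All of these identifications are transparent on elementary tensors, which is precisely why the Fourier reduction of the first step, turning $f$ into plane waves, does the job; without it one would have to set up these shuffle isomorphisms abstractly and check they intertwine the functional calculi, which is the only genuinely delicate point.
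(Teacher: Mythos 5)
Your proof is correct and follows essentially the same route as the paper's: the paper likewise writes $f_\gamma(\tnabl 0;_a,\ldots,\tnabl n;_a)$ as the Bochner integral $\int\widehat f(\xi)\exp_\gamma(i\inn{\xi,\tnabl\cdot;_a})\,\dsl\xi$, evaluates the contractions on the elementary tensors $\bigotimes_j(e^{-i\xi_j a}\otimes e^{i\xi_j a})$ to get $e^{-i\xi_0 a}b_1'e^{-i\xi_1 a}\cldots b_n'e^{-i\xi_n a}\,x\,e^{i\xi_0 a}b_1''\cldots b_n''e^{i\xi_n a}$, and recognizes this as the Fourier representation of the right-hand side, with the second identity following from \Eqref{EQTREAR6}. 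Your additional remarks on interchanging the continuous contraction maps with the Bochner integral and on the reassociation $\stAn_\gamma\simeq\sA^{\otimes 2n+2}_\gamma$ only make explicit what the paper calls ``a straightforward calculation.''
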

\begin{proof}
This follows from a straightforward calculation:
\[\begin{split}
\bl f_\gamma(&\tnabl 0;_a,\ldots,\tnabl n;_a)(b_1'\otimes b_1''\cldots b_n'\otimes
b_n'')\br(x)\nonumber\\
    & =
      \int_{\R^{n+1}} \widehat f(\xi)
         \bl e^{-i\xi_0 a}\otimes e^{i\xi_0 a} b_1'\otimes
         b_1''\otimes\ldots
         \otimes b_n'\otimes b_n'' e^{-i\xi_n a}\otimes e^{i\xi_n a}
         \br(x) \,\dsl \xi\\
    & =
      \int_{\R^{n+1}} \widehat f(\xi)
         e^{-i\xi_0 a} b_1' e^{-i\xi_1 a} b_2' \cldots
         b_n' e^{-i\xi_n a} x e^{i\xi_0 a} b_1'' \cldots b_n'' 
         e^{i\xi_n a} \,\dsl \xi\\
    & = f_\gamma(-a\pup 0; +a\pup n+1;, -a\pup 1; + a\pup n+2;,\ldots, -a\pup n;
            +a\pup 2n+1;)\\
     &\qquad \qquad (b_1'\cldots b_n'\cdot x\cdot b_1''\cldots b_n'').\nonumber
     \qedhere
\end{split}\]
\end{proof}

This Lemma and the expansion \plref{PMagnus} allow to expand 
$f(\nabla_{a+b})(x)$ in principle to any order, although the
combinatorics becomes tedious. We note the expansion up to order $2$,
\textit{cf.}~\cite[Lemma 4.11 and Lemma 4.12]{ConMos2011}.

\begin{prop}\label{PNablaExpansion} 
Let $a, x\in\sA$ be selfadjoint.
Then for a Schwartz function $f\in\sS(\R)$ the 
Taylor expansion up to order $2$
of $f(\nabla_{a+b})(x)$ for selfadjoint $b\sim 0$ 
is given by
\[\begin{split}
  f(\nabla_{a+b})(x)  = &   f(\nabla_a)(x) \\
      &  -([\nabl 1;_a+\nabl 2;_a, \nabl 2;_a]f_\gamma)(b\cdot x)
        + ([\nabl 1;_a+\nabl 2;_a, \nabl 1;_a]f_\gamma)(x\cdot b)\\
      & + ([\nabl 1;_a+\nabl 2;_a+\nabl 3;_a , \nabl 2;_a+\nabl 3;_a,
      \nabl 3;_a]f_\gamma)(b\cdot b \cdot x)\\
      & + ([\nabl 1;_a, \nabl 1;_a + \nabl 2;_a,\nabl 1;_a+\nabl 2;_a
           +\nabl 3;_a]f_\gamma)(x\cdot b \cdot b)\\
      & + ([\nabl 1;_a + \nabl 2;_a, 
          \nabl 1;_a +\nabl 2;_a+\nabl 3;_a,
          \nabl 2;_a+\nabl 3;_a]f_\gamma)(b\cdot x \cdot b)\\
      & + ([\nabl 1;_a + \nabl 2;_a, \nabl 2;_a,
            \nabl 2;_a+\nabl 3;_a]f_\gamma)(b\cdot x \cdot b).
\end{split}\]
\end{prop}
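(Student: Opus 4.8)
The plan is to reduce everything to the one--variable expansion of Proposition~\ref{PMagnus}. Since $c\mapsto\nabla_c=-c\otimes 1_\sA+1_\sA\otimes c$ is linear, in the Banach--$*$--algebra $\stA=\sA\otimes_\gamma\sA$ we have $\nabla_{a+b}=\nabla_a+\nabla_b$, where $\nabla_a$ is selfadjoint and $\nabla_b$ is a selfadjoint perturbation tending to $0$ with $b$. Although $\stA$ is not a $C^*$--algebra, the Fourier--transform functional calculus of Remark~\ref{ssTen4} still applies to $\nabla_a,\nabla_b,\nabla_{a+b}$: for selfadjoint $c$ the exponential $\exp_\gamma(i\xi\nabla_c)=e^{-i\xi c}\otimes e^{i\xi c}$ is a product of unitaries of $\|\cdot\|_\gamma$--norm $1$, which is the only feature of the ambient $C^*$--structure used in the proof of Proposition~\ref{PMagnus}. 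Hence that proof carries over verbatim with $\stA$ in place of $\sA$ and gives
\[
 f(\nabla_{a+b})\sim_{b\to 0}\sum_{n=0}^\infty
   \bl[\tnabl 0;_a,\ldots,\tnabl n;_a]\,f_\gamma\br(\nabla_b\cldots\nabla_b),
\]
the divided differences of $f$ being applied through the smooth functional calculus of Theorem~\ref{TProduct} (their regularity and growth are controlled by the Genocchi--Hermite representation \eqref{EQHermite}). Contracting with $x\in\sA$ and discarding the $n\ge 3$ terms (a remainder of order $\|b\|^3$) leaves the groups $n=0,1,2$ to be evaluated.

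The $n=0$ term is just $f(\nabla_a)(x)$. For $n=1,2$ I would substitute $\nabla_b=-b\otimes 1_\sA+1_\sA\otimes b$ into each of the $n$ slots, turning $\nabla_b\cldots\nabla_b$ into a sum of $2^n$ elementary contractions $\pm(b_1'\otimes b_1'')\cldots(b_n'\otimes b_n'')$ in which each $b_j'\otimes b_j''$ is either $-b\otimes 1_\sA$ (a sign, with $b_j'=b$, $b_j''=1_\sA$) or $1_\sA\otimes b$ (with $b_j'=1_\sA$, $b_j''=b$). To each of these I would apply Lemma~\ref{LNabla1}, which converts $([\tnabl 0;_a,\ldots,\tnabl n;_a]f_\gamma)(b_1'\otimes b_1''\cldots b_n'\otimes b_n'')$, contracted once more with $x$, into a functional calculus of $a$ alone, namely a divided difference of $f$ in the operators $a\pup 0;,\ldots,a\pup 2n+1;$ applied to the word $b_1'\cldots b_n'\cdot x\cdot b_1''\cldots b_n''$. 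Each factor equal to $1_\sA$ merges two adjacent tensor slots by \eqref{EQMult1}, so after the mergings the surviving word contains exactly the $n$ copies of $b$ and the single $x$; I would then re--express the remaining $a\pup j;$ through the $\nabl j;_a$ by \eqref{EQTREAR6} and normalise the node lists using the symmetry of divided differences.

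Carried out concretely, for $n=1$ the two terms of $\nabla_b$ collapse to $-([\nabl 1;_a+\nabl 2;_a,\nabl 2;_a]f_\gamma)(b\cdot x)$ and $+([\nabl 1;_a+\nabl 2;_a,\nabl 1;_a]f_\gamma)(x\cdot b)$. For $n=2$ the four terms split into the two ``pure'' ones, coming from $(b\otimes 1_\sA)^{\otimes 2}$ and $(1_\sA\otimes b)^{\otimes 2}$, which produce the $b\cdot b\cdot x$ and $x\cdot b\cdot b$ contributions with node lists $[\nabl 1;_a+\nabl 2;_a+\nabl 3;_a,\ \nabl 2;_a+\nabl 3;_a,\ \nabl 3;_a]$ and $[\nabl 1;_a,\ \nabl 1;_a+\nabl 2;_a,\ \nabl 1;_a+\nabl 2;_a+\nabl 3;_a]$, and the two ``mixed'' ones, coming from $(b\otimes 1_\sA)\otimes(1_\sA\otimes b)$ and $(1_\sA\otimes b)\otimes(b\otimes 1_\sA)$, which both produce $b\cdot x\cdot b$ contributions carrying the two remaining node lists in the statement. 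Assembling the six contributions gives the asserted formula.

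The genuinely delicate points are, first, the observation that Proposition~\ref{PMagnus} transfers to the non--$C^*$ algebra $\stA$, and second, the purely combinatorial bookkeeping of the $2^n$ terms, of which tensor slots collapse, and of the signs --- routine but easy to slip on. As an independent check I would rederive the expansion directly, bypassing $\stA$ altogether: write $f(\nabla_{a+b})(x)=\int_\R\widehat f(\xi)\,e^{-i\xi(a+b)}\,x\,e^{i\xi(a+b)}\,\dsl\xi$, expand both exponentials by the expansional formula \eqref{EQExpansion}, collect all contributions of order $\le 2$ in $b$, and recognise each resulting iterated integral of exponentials of $a$ interlaced with the $b$'s and $x$ through the Genocchi--Hermite formula \eqref{EQHermite}; this produces the same six terms and pins down all the signs.
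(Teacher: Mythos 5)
Your argument is exactly the paper's proof: apply Proposition~\ref{PMagnus} in the algebra $\stA=\sA\otimes_\gamma\sA$ to $\nabla_{a+b}=\nabla_a+\nabla_b$, substitute $\nabla_b=-b\otimes 1_\sA+1_\sA\otimes b$ into the contraction slots, and convert each of the resulting elementary terms via Lemma~\ref{LNabla1} and \eqref{EQTREAR6}; your bookkeeping of the two first-order and four second-order terms (the two mixed ones both landing on $b\cdot x\cdot b$) matches the stated formula. Your explicit justification that Proposition~\ref{PMagnus} transfers to the non-$C^*$ Banach-$*$-algebra $\stA$ --- because $\exp_\gamma(i\xi\nabla_c)=e^{-i\xi c}\otimes e^{i\xi c}$ has $\gamma$-norm at most $1$ by the cross-norm property --- is a worthwhile detail that the paper's one-line proof leaves implicit.
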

The two variable functions involved in the linear term are 
\begin{equation}
-[s+t,t]f= -\frac{f(s+t)-f(t)}{s},\quad
[s+t,s]f=\frac{f(s+t)-f(s)}{t},
\end{equation}
this should be compared to \cite[(134)]{ConMos2011}.
\begin{proof} One just has to apply Prop. \ref{PMagnus} to $f(\nabla_{a+b})$
in the algebra $\stA$ and apply the previous Lemma. We do the
calculation for the linear term and leave the second order term to the
interested reader. 
\[\begin{split}
  ([\tnabl 0;_a, \tnabl 1;_a] &f_\gamma)(\tilde \nabla_b)(x)
    = ([\tnabl 0;_a, \tnabl 1;_a] f_\gamma)(-b\otimes 1_\sA +
    1_\sA\otimes b)(x)\\
   = & ([-a\pup 0; + a\pup 2;,-a\pup 1; + a\pup 3;]f_\gamma)(- b\cdot x\cdot
  1 + 1\cdot x\cdot b)\\
   = & -([\nabl 1;_a+ \nabl 2;_a, \nabl 2;_a]f_\gamma)( b\cdot x)
       +([\nabl 1;_a+ \nabl 2;_a, \nabl 1;_a]f_\gamma)( x\cdot b).
       \qedhere
\end{split}\]
\end{proof}

\begin{cor} Let $\gvf$ be a tracial state on $\sA$. Then, for
 selfadjoint elements $a, b, x, y\in\sA$ we have
\begin{multline*}
 \frac{d}{d\eps}\restr{\eps=0}
   \varphi\bl f(\nabla_{a+\eps b})(x)y \br \\
   = -\varphi\bl b ([\nabl 1;_a, -\nabl 2;_a]f_\gamma) (x\cdot y)\br
      +\varphi\bl b ([-\nabl 1;_a, \nabl 2;_a]f_\gamma) (y\cdot x)\br.
\end{multline*}
\end{cor}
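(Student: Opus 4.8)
The plan is to specialize Proposition \ref{PNablaExpansion} to the situation at hand and then use the trace property of $\varphi$. First I would observe that $\frac{d}{d\eps}\restr{\eps=0}\varphi\bl f(\nabla_{a+\eps b})(x)y\br$ is precisely the trace of $y$ times the linear (order one) term in the expansion of $f(\nabla_{a+\eps b})(x)$ as given in Proposition \ref{PNablaExpansion}. So we get
\[
 \frac{d}{d\eps}\restr{\eps=0}\varphi\bl f(\nabla_{a+\eps b})(x)y\br
 = -\varphi\bl ([\nabl 1;_a+\nabl 2;_a, \nabl 2;_a]f_\gamma)(b\cdot x)\, y\br
   + \varphi\bl ([\nabl 1;_a+\nabl 2;_a, \nabl 1;_a]f_\gamma)(x\cdot b)\, y\br.
\]

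Next I would unwind the contraction notation. For a two--variable smooth function $g$ the element $([\nabl 1;_a,\nabl 2;_a]g_\gamma)(c)$ lives in $\sA$ and, spelling out $g_\gamma(a\pup 0;,a\pup 1;,a\pup 2;)$ via the Fourier representation (Remark \ref{Rem1}) combined with the identity $i\xi_1\nabl 1;_a + i\xi_2\nabl 2;_a = -i\xi_1 a\pup 0; + i(\xi_1-\xi_2)a\pup 1; + i\xi_2 a\pup 2;$, it equals $\int \widehat g(\xi) e^{-i\xi_1 a}\, c\, e^{i(\xi_1-\xi_2)a}\cdot(\text{tail})$, i.e.\ it is a Bochner integral over expressions of the form $e^{\alpha a} c e^{\beta a} e^{\gamma a}$ with $\alpha+\beta+\gamma$ reflecting the argument slots. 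The key point is that each such term has the shape $X c Z$ with $X, Z$ functions of $a$, and hence $\varphi\bl (X c Z)\, y\br = \varphi\bl c\, (ZyX)\br$ by the trace property; rewriting $ZyX$ back in terms of the functional calculus flips which slot of $f_\gamma$ is acted on by what, and moves $b$ into the outermost position. Carrying this out for the two terms, and using that $\varphi$ is tracial so that $\varphi(Xcy Z)=\varphi(b\cdot\ldots)$ when $c=bx$ or $c=xb$, yields exactly the right-hand side $-\varphi\bl b([\nabl 1;_a,-\nabl 2;_a]f_\gamma)(x\cdot y)\br + \varphi\bl b([-\nabl 1;_a,\nabl 2;_a]f_\gamma)(y\cdot x)\br$.

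Concretely: the first term $-\varphi\bl([\nabl 1;_a+\nabl 2;_a,\nabl 2;_a]f_\gamma)(bx)\,y\br$; writing the two-variable divided difference function as $g(s,t)=[s+t,t]f$ one has $([\nabl 1;_a+\nabl 2;_a,\nabl 2;_a]f_\gamma)(bx)=\int\widehat g(\xi_1,\xi_2) e^{-i\xi_1 a}\,bx\,e^{i(\xi_1-\xi_2)a}e^{i\xi_2 a}\dsl\xi = \int\widehat g(\xi) e^{-i\xi_1 a}\,bx\,e^{i\xi_1 a}\dsl\xi$, whose trace against $y$ is $\int\widehat g(\xi)\varphi\bl b\cdot x e^{i\xi_1 a} y e^{-i\xi_1 a}\br\dsl\xi = \varphi\bl b\cdot ([\nabl 1;_a,\nabl 2;_a]\tilde g_\gamma)(xy)\br$ for an appropriate repackaging; matching the Fourier weights shows this is $\varphi\bl b([\nabl 1;_a,-\nabl 2;_a]f_\gamma)(xy)\br$, giving the first summand with the correct sign. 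The second term is handled symmetrically, with $b$ moved the other way around the trace, producing $+\varphi\bl b([-\nabl 1;_a,\nabl 2;_a]f_\gamma)(yx)\br$.

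The main obstacle is bookkeeping: correctly tracking which $a\pup j;$ slot maps to which $e^{\pm i\xi a}$ factor under the Fourier representation, and then checking that after the cyclic permutation inside $\varphi$ the resulting exponentials reassemble into the claimed divided-difference functions $[\nabl 1;_a,-\nabl 2;_a]f$ and $[-\nabl 1;_a,\nabl 2;_a]f$ (note the sign changes in the slots, reflecting that the trace reverses the order of the outer $e^{i\xi a}$ factors). Once the identification of Fourier weights is done carefully using the homogeneity and symmetry properties of divided differences recorded in the appendix, no further analytic subtlety arises, since all integrals converge absolutely as Bochner integrals by Theorem \ref{TProduct} and the Schwartz assumption on $f$.
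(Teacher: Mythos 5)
Your plan --- take the linear term of Proposition \ref{PNablaExpansion} (with $b$ replaced by $\varepsilon b$), pair it with $y$ under $\varphi$, and use traciality to cycle factors before reassembling the result as a divided difference in $\nabla_a^{(1)},\nabla_a^{(2)}$ --- is exactly the route the paper takes, and the final appeal to the symmetry of divided differences is also the paper's. However, your concrete execution of the middle step is wrong as written. The contraction $\bigl([\nabla_a^{(1)}+\nabla_a^{(2)},\nabla_a^{(2)}]f_\gamma\bigr)(b\cdot x)$ means contraction by the tensor $b\otimes x$, so $b$ and $x$ occupy two \emph{different} gaps: by Remark \ref{Rem1} the Fourier integrand is $e^{-i\xi_1 a}\,b\,e^{i(\xi_1-\xi_2)a}\,x\,e^{i\xi_2 a}$, not $e^{-i\xi_1 a}\,bx\,e^{i(\xi_1-\xi_2)a}e^{i\xi_2 a}$. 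Placing $b$ and $x$ adjacent collapses the remaining exponentials to a single conjugation and erases the $\xi_2$--dependence of the operator part; the resulting expression $\int\widehat g(\xi)\,\varphi\bigl(b\,x\,e^{i\xi_1 a}\,y\,e^{-i\xi_1 a}\bigr)\,d\xi$ is a genuinely one--variable quantity and cannot equal the two--variable right--hand side $\varphi\bigl(b\,([\nabla_a^{(1)},-\nabla_a^{(2)}]f_\gamma)(x\cdot y)\bigr)$. The ``matching of Fourier weights'' you defer to would therefore fail at this point.

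The repair is the paper's slot bookkeeping, which is cleaner than chasing Fourier variables. Write the first term as $-\varphi\bigl(([-a^{(0)}+a^{(2)},-a^{(1)}+a^{(2)}]f_\gamma)(b\cdot x)\cdot y\bigr)$, using $\nabla_a^{(1)}+\nabla_a^{(2)}=-a^{(0)}+a^{(2)}$ and $\nabla_a^{(2)}=-a^{(1)}+a^{(2)}$. An elementary summand has the shape $a_0\,b\,a_1\,x\,a_2\,y$, and traciality gives $\varphi(a_0\,b\,a_1\,x\,a_2\,y)=\varphi(b\,a_1\,x\,a_2\,y\,a_0)$. This cyclic shift replaces $(a^{(0)},a^{(1)},a^{(2)})$ by $(a^{(2)},a^{(0)},a^{(1)})$ in the argument of the slot function and replaces the contraction by $b\otimes x$ with left multiplication by $b$ and contraction by $x\otimes y$, yielding $-\varphi\bigl(b\,([-a^{(2)}+a^{(1)},-a^{(0)}+a^{(1)}]f_\gamma)(x\cdot y)\bigr)=-\varphi\bigl(b\,([\nabla_a^{(1)},-\nabla_a^{(2)}]f_\gamma)(x\cdot y)\bigr)$ by symmetry of the divided difference (homogeneity plays no role). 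The second term is handled identically.
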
 
Note that 
\begin{equation}
  -[s, -t]f=\frac{f(-t)-f(s)}{s+t},\quad
  [-s, t]f =\frac{f(t)-f(-s)}{s+t}.
\end{equation}
This should be compared to \cite[(131)]{ConMos2011},
where $f$ is assumed to be even and hence $-[s,-t]f=[-s,t]f=\mlfrac
f(t)-f(s)/s+t;$.
\begin{proof} Using the previous Proposition we calculate
\begin{align*}
 \frac{d}{d\eps}\restr{\eps=0}
   &\varphi\bl f(\nabla_{a+\eps b})(x)y\br\\
 = & -\varphi \bl([-a\pup 0; + a\pup 2;,-a\pup 1; + a\pup 2;]f_\gamma)(
 b\cdot x)\cdot y \br\\
  &\qquad + \varphi \bl([-a\pup 0; + a\pup 2;,-a\pup 0; + a\pup 1;]f_\gamma)(
 x\cdot b)\cdot y \br\\
 = &  -\varphi\bl b ([- a\pup 2;+a\pup 1;, - a\pup 0; + a\pup 1;]f_\gamma) (x\cdot y)\br
      \\
      &\qquad 
 +\varphi\bl b ([-a\pup 1;+a\pup 0;, -a\pup 1;+a\pup 2;]f_\gamma)
 (y\cdot x)\br,
\end{align*}
and the result follows in view of \Eqref{EQtensor1} and the fact that
divided differences are symmetric functions of their arguments.
\end{proof}

\section{The functions occurring in the Rearrangement Lemma
for the modular curvature}
\label{SecFunctions}


\subsection{The Mellin transform of $(1+x)^{-m-1}$}
\label{ssIntegral}
By a contour integral argument
\cite[3.123]{Tit:TF} the Mellin transform of $x\mapsto (1+x)\ii$
is given by
\[
 \int_0^\infty x^{z-1} \mlfrac 1/1+x; \dint x = 
     \mlfrac \pi/\sin\pi z;, \quad 0<\Re z<1,
\]
and integration by parts yields
\[
  \int_0^\infty x^{z-1} \mlfrac 1/(1+x)^{m+1}; \dint x
  = \frac{\ffak z-1,m;}{m!} 
       \mlfrac \pi/\sin\pi z;.
\]
Since $\mlfrac 1/\sin \pi z;$ decays exponentially on vertical lines
we conclude that the functions $x\mapsto (1+x)^{-m-1}$ are given
by the inversion formula
\[
   (1+x)^{-m-1} = \int_{\Re z=\ga} x^{-z} 
   \frac{\ffak z-1,m;}{m!} 
       \mlfrac \pi/\sin\pi z;\dint z
\]
for $0<\Re \ga<1$.

\subsection{The functions $\fM(s,m)$ and $\fH(s,m)$}
\label{ssFunctions}

Given $p\in \Z_{\ge 1}$, a multiindex $\ga\in\N^{p+1}$
and $s_j>0, j=0,\ldots, p,$ put 
\begin{align}
\fM(s,z)& := \int_0^\infty
x^{|\ga|+p-1-z} \cdot \prod_{j=0}^p (1+s_j x)^{-\ga_j-1} \,dx,
\quad -1<\Re z < |\ga|+p, \label{EQ2.2a}\\
        &= \int_0^\infty x^{z}\cdot \prod_{j=0}^p (x+ s_j)^{-\ga_j-1} \,dx,
                 \label{EQ2.2b}
\end{align}
where the second line is obtained by changing variables $x\mapsto
x\ii$. Furthermore,
\begin{equation*}
\fH(s',z):= \fM((1,s'),z),\quad s'=(s_1,\ldots, s_p).
\end{equation*}
We are mainly interested in integral values of $z$.  The integrals
\Eqref{EQ2.2a}, \eqref{EQ2.2b} converge absolutely for 
$-1<\Re z<|\ga|+p$. So $z=m\in\Z$ may take the values
$0,1,\ldots,|\ga|+p-1$.  The function 
$\fM(\cdot,z)$ is $(- |\ga|-p+z)$--homogeneous, that is
\begin{equation}\label{EQ2.3}
  \fM(\gl s,z) = \gl^{-|\ga|-p+z} \fM(s,z),
\end{equation}  
as is seen by changing variables from $\gl x$ to $x$. Therefore, 
scaling $s_0$ gives
\begin{equation*}
  \fM( s,z) = s_0^{-|\ga|-p+z} \fH(s'/s_0,z).
\end{equation*}  

$\fM(s,m)$ and $\fH(s,m)$ can be expressed in terms of closed formulas
involving divided differences and differentiations:

\begin{prop}\label{PIFormula} For a multiindex $\ga=(\ga_0,\ldots,\ga_p)\in
\N^{p+1}$ and $s=(s_0,\ldots, s_p)$ with $s_j>0$ let
$(u_0,\ldots,u_{|\ga|+p})$ be the tuple with $u_0=\ldots
=u_{\ga_0}=s_0, u_{\ga_0+1}=\ldots=u_{\ga_0+\ga_1+1}=s_1,
\ldots, u_{|\ga|+p-1-\ga_p}=\ldots= u_{|\ga|+p}=s_p$.\footnote{In other
words, the tuple $u$ consists of $\ga_0+1$ copies of $s_0$, $\ga_1+1$
copies of $s_1$ etc.} Furthermore, let 
$\ga':=(0,\ga_1,\ldots,\ga_p)$.
Then for $m\in \{0, 1,\ldots,|\ga|+p-1\}$
\begin{align}
\fM(s,m)   & = (-1)^{m+|\ga|+p-1} \DDots u_0,u_{|\ga|+p}; \id^m \log
                  \label{EQPIFormula1}\\
      & = \mlfrac (-1)^{m+|\ga|+p-1}/\ga!; \pl_s^\ga 
                    \DDots s_0,s_p; \id^m \log
                  \label{EQPIFormula2}.
\end{align}                    
Here, $\id^m$ stands for the function $x\mapsto x^m$ and 
$\DDots y_0,y_n; f$ stands for the divided difference of the
function $f$ with respect to the variables $y_0,\ldots, y_n$.

If $m\in \{0, 1,\ldots,|\ga'|+p-1\}$ then also
\begin{align}
 \fM(s,m)  & = \mlfrac (-1)^{|\ga'|+p-1-m}/\ga!;
     \Bffak \sum_{k=1}^p s_k \pl_{s_k} +|\ga|+p-1-m,\ga_0;\cdot
                  \label{EQPIFormula3}\\
      & \qquad \qquad \cdot \pl_s^{\ga'} \DDots s_0,s_p; \id^m \log.
                    \nonumber
\end{align}                    
Here, $\ffak \sum_{k=1}^p s_k \pl_{s_k} +|\ga|+p-1-m,\ga_0;$
is the differential operator $\sum_{k=1}^p s_k \pl_{s_k} +|\ga|+p-1-m$
inserted into the falling factorial polynomial 
$\ffak a,\ga_0;= a\cdot (a-1)\cldots (a-n+1)$.

Consequently, for $\fH$ and $m\in\{0,1,\ldots,|\ga'|+p-1\}$
we have the following formula which only involves partial derivatives
in the variables $s_1,\ldots, s_p$
\begin{align}
\fHp \ga,p;(s',m)   
    & = \fM((1,s'),m),\quad s'=(s_1,\ldots, s_p)\nonumber \\
    & = \mlfrac (-1)^{|\ga'|+p-1-m}/\ga!;
        \Bffak \sum_{k=1}^p s_k \pl_{s_k} +|\ga|+p-1-m,\ga_0;\cdot
                  \label{EQPIFormula3a}\\
    & \qquad \qquad \cdot \pl_s^{\ga'} [1,s_1,\ldots,s_p] \id^m \log.
                    \nonumber
\end{align}
\end{prop}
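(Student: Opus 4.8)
The plan is to reduce everything to the basic Mellin transform computation of Section~\ref{ssIntegral} and then repeatedly differentiate under the integral sign, recognizing divided differences via the Genocchi--Hermite formula \Eqref{EQHermite}. Concretely, start from the inversion formula
\[
  (1+x)^{-m-1} = \int_{\Re z=\ga} x^{-z}
  \frac{\ffak z-1,m;}{m!} \mlfrac \pi/\sin\pi z; \dint z,
\]
substitute $x\mapsto s_j x$ in each factor $(1+s_jx)^{-\ga_j-1}$, and plug into \Eqref{EQ2.2a}. Interchanging the $x$-integral with the contour integrals (justified by absolute convergence on the strip) and carrying out the elementary integral $\int_0^\infty x^{|\ga|+p-1-m} x^{-z_0-\ldots-z_p}\dint x$, which is a delta distribution forcing $\sum z_j = |\ga|+p-m$, one is left after a residue evaluation with an expression of the form $\frac{1}{\ga!}\,(\text{const})\cdot\big(\text{sum over the poles of }\pi/\sin\pi z\text{ at }z\in\Z_{\le 0}\big)$ involving $\prod_j s_j^{-z_j}$. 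The upshot, which I would record as the key lemma, is that $\fM(s,m)$ equals a constant times the $(|\ga|+p)$-th divided difference of $\id^m\log$ at the nodes $u_0,\ldots,u_{|\ga|+p}$; the appearance of $\log$ is exactly the derivative $\partial_z s^{-z}=-s^{-z}\log s$ picked up at the coalescing poles, and the repetition pattern of the nodes is dictated by the multiplicities $\ga_j+1$. This gives \Eqref{EQPIFormula1}.

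For \Eqref{EQPIFormula2} I would use the standard \emph{confluence} property of divided differences (Appendix~\ref{SecDD}): the divided difference at a node repeated $k+1$ times is obtained from the divided difference with distinct nodes by applying $\frac{1}{k!}\partial^k$ in the corresponding variable. Thus $\DDots u_0,u_{|\ga|+p}; g = \frac{1}{\ga!}\,\partial_s^\ga \DDots s_0,s_p; g$ with $g=\id^m\log$, which is precisely the passage from \eqref{EQPIFormula1} to \eqref{EQPIFormula2}. This step is essentially a citation of the divided-difference formalism and requires no new work.

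For the more refined formula \Eqref{EQPIFormula3}, the point is to peel off the multiplicity of the node $s_0$, i.e.\ to express the $\ga_0$ extra copies of $s_0$ not via $\partial_{s_0}$ but via a differential operator in $s_1,\ldots,s_p$. The mechanism is the homogeneity relation \Eqref{EQ2.3}: since $\fM(\cdot,m)$ is $(-|\ga|-p+m)$-homogeneous, the Euler operator $\sum_{k=0}^p s_k\partial_{s_k}$ acts on it as multiplication by $-|\ga|-p+m$, so $s_0\partial_{s_0}$ can be traded for $-|\ga|-p+m-\sum_{k=1}^p s_k\partial_{s_k}$. Iterating this to replace the block $\partial_{s_0}^{\ga_0}$ (acting after the derivatives of lower order have already been applied, which shifts the effective homogeneity degree) produces the falling-factorial differential operator $\ffak \sum_{k=1}^p s_k\partial_{s_k}+|\ga|+p-1-m,\ga_0;$ acting on $\frac{1}{\ga!}\partial_s^{\ga'}\DDots s_0,s_p;\id^m\log$; here $\ga'=(0,\ga_1,\ldots,\ga_p)$ reflects that we no longer differentiate in $s_0$. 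One must check the bookkeeping of the homogeneity degree as successive factors of $s_0\partial_{s_0}$ are applied — each application lowers the degree by $1$, which is exactly what turns the repeated Euler substitution into a falling factorial — and one must verify that this manipulation stays within the range $m\le|\ga'|+p-1$ where all objects are defined. Finally \Eqref{EQPIFormula3a} is just the specialization $s_0=1$ of \eqref{EQPIFormula3}, using $\fH(s',m)=\fM((1,s'),m)$ and noting that setting $s_0=1$ after applying only $\partial_{s_1},\ldots,\partial_{s_p}$-derivatives and the Euler operator in $s_1,\ldots,s_p$ causes no loss of information.

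I expect the main obstacle to be the residue/confluence bookkeeping in the key lemma: correctly tracking which poles of $\pi/\sin\pi z$ contribute, how the constraint $\sum z_j=|\ga|+p-m$ collapses the multiple contour integral, and verifying that the resulting finite sum is literally the Genocchi--Hermite integral representation of $\DDots u_0,u_{|\ga|+p};\id^m\log$ with the correct sign $(-1)^{m+|\ga|+p-1}$ and no spurious constant. The homogeneity-to-falling-factorial step in \eqref{EQPIFormula3} is conceptually the slickest but also error-prone in the exponent arithmetic; a clean way to organize it is to prove first the distinct-nodes identity (all $s_j$ distinct, all $\ga_j=0$ except possibly $\ga_0$), handle the $s_0$-multiplicity by the Euler argument there, and only then apply confluence in $s_1,\ldots,s_p$ to restore general $\ga_1,\ldots,\ga_p$.
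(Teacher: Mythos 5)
Your treatment of \Eqref{EQPIFormula2} (confluence via \Eqref{EQDD4}) and of \Eqref{EQPIFormula3}, \eqref{EQPIFormula3a} (trading $s_0\pl_{s_0}$ for the Euler operator in $s_1,\ldots,s_p$ using the homogeneity \Eqref{EQ2.3}, with the degree shift under iteration producing the falling factorial) coincides with the paper's argument and is fine. The gap is in the key identity \Eqref{EQPIFormula1}, which you do not actually prove: you outline a Mellin--Barnes computation and then explicitly defer its hardest part (``the residue/confluence bookkeeping'') as an expected obstacle. As written the route has real problems. After replacing every factor $(1+s_jx)^{-\ga_j-1}$ by its Mellin--Barnes integral, the remaining $x$-integral $\int_0^\infty x^{|\ga|+p-1-m-\sum_j z_j}\dint x$ diverges for \emph{every} choice of the $z_j$ and can only be read as a delta constraint distributionally (one must keep one factor unexpanded, or work with the Mellin transform of the full product); and the multi-contour residue evaluation at the coalescing poles of $\pi/\sin\pi z$, which is supposed to produce the logarithms with precisely the divided-difference weights and the sign $(-1)^{m+|\ga|+p-1}$, is the entire content of the statement and is left undone. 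This is essentially the Connes--Moscovici computation that the paper is written to avoid.

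The paper's own proof of \Eqref{EQPIFormula1} is a short elementary argument: change variables $x\mapsto x\ii$ to pass to the form \Eqref{EQ2.2b}, treat first distinct nodes $t_0,\ldots,t_q$ with all multiplicities one, and decompose $x^m\prod_{j}(x+t_j)\ii=\sum_k A_k(x+t_k)\ii$ by partial fractions; since the degree of the rational function is $\le -2$ one has $\sum_k A_k=0$, hence $\int_0^\infty\sum_k A_k(x+t_k)\ii\dint x=-\sum_k A_k\log t_k$, and the explicit coefficients $A_k=(-1)^{m+q}\,t_k^m\prod_{j\ne k}(t_k-t_j)\ii$ make this literally $(-1)^{m+q-1}\DDots t_0,t_q;\id^m\log$ by \Eqref{EQDD2}. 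Repeated nodes then follow by continuity, and \Eqref{EQPIFormula2} by \Eqref{EQDD4}; no contour integrals, residues, or Genocchi--Hermite formula are needed here. If you wish to keep your plan you must actually carry out the residue computation and the distributional justification; otherwise replace your key lemma by the partial-fraction argument.
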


Recall that divided differences are explained in Appendix \ref{SecDD}
below. For more on the falling factorials see Sec.~\ref{ss3.2} below.

\begin{proof}
We start with distinct positive variables $t_0,\ldots, t_q; q:=|\ga|+p$. 
Then by \Eqref{EQ2.2b}
\[
   \fMp 0,q;(t,m) 
      = \int_0^\infty x^{m} \prod_{j=0}^q (x+ t_j)\ii \dint x.
\]
The integrand is a rational function of degree $m-q-1\le -2$.
Therefore, it has a partial fraction decomposition
\[
      x^{m} \prod_{j=0}^q (x+ t_j)\ii 
         = \sum_{k=0}^q A_k \; (x+t_k)\ii,
\]
with
$\sum\limits_{k=0}^q A_k  = 0$. The $A_k$ are explicitly
given by
\[
 A_k  = (-t_k)^{m} \prod_{j=0, j\not=k}^q (t_j-t_k)\ii
      = (-1)^{m+q}\; t_k^{m}\; \prod_{j=0, j\not=k}^q (t_k-t_j)\ii.
\]
Thus we find
\begin{equation}\label{EQ3.5.1}\begin{split}
  \fMp 0,q;(t,m) &= -\sum_{k=0}^q A_k \log t_k \\
  &  = (-1)^{m+q-1} \sum_{k=0}^q \Bl \prod_{j=0, j\not=k}^q (t_k-t_j)\ii\Br
     t_k^{m} \log t_k \\
     & = (-1)^{m+q-1} \DDots t_0,t_q; \id^{m} \log.  
\end{split}\end{equation}     
In the last equation \Eqref{EQDD2} was used.
By continuity this formula also holds for not necessarily distinct
variables $t_0,\ldots, t_q$. Hence by \Eqref{EQDD4}
\[\begin{split}
\fM (s,m) & = (-1)^{m+|\ga|+p-1} \DDots s_0^{\ga_0+1},s_p^{\ga_p+1};
                  \id^{m} \log\\
                  & = \mlfrac (-1)^{m+|\ga|+p-1}/\ga!; \pl_s^\ga 
             \DDots s_0,s_p; 
                  \id^{m} \log,
\end{split}\]                  
thus \Eqref{EQPIFormula1} and \Eqref{EQPIFormula2} are proved. 

The proof of the remaining claims about the formulas involving falling
factorials is postponed to the Appendix \ref{AppB}. 
\end{proof}

\begin{remark}  For general $z\not\in\Z$ one may calculate $\fMp 0,q;(t,z)$
similarly. From the partial fraction decomposition 
\[\begin{split}
      \prod_{j=0}^q (x+ t_j)\ii 
         &= \sum_{k=0}^q A_k \; (x+t_k)\ii,\\
     A_k  &= (-1)^{q}  \prod_{j=0, j\not=k}^q (t_k-t_j)\ii,
\end{split}\]
and Sec.~\ref{ssIntegral} we infer
\[\begin{split}
  \fMp 0,q;(t,z) &= \mlfrac -\pi/\sin\pi z;\sum_{k=0}^q A_k t_k^z\\
  &  = \mlfrac (-1)^{q-1}\pi/\sin\pi z; \sum_{k=0}^q \Bl \prod_{j=0, j\not=k}^q (t_k-t_j)\ii\Br
     t_k^{z} \\
     & =\mlfrac (-1)^{q-1}\pi /\sin\pi z; \DDots t_0,t_q; \id^z.  
\end{split}\]     
Taking the limit $z\to m\in \Z$ one obtains again \Eqref{EQ3.5.1}.
\end{remark}

\section{Examples}
\label{SecExamples}

Recall from \Eqref{EQ2.2a}, \eqref{EQ2.2b} and Proposition \ref{PIFormula}
that for $s_j>0$ and $m\in \{0,1,\ldots,|\ga|+p-1\}$
\begin{align}
\fH(s,m)& := \int_0^\infty x^{|\ga|+p-1-m} \cdot (1+x)^{-\ga_0-1} \cdot
         \prod_{j=1}^p (1+s_j x)^{-\ga_j-1} \dint x,\\
        & = \int_0^\infty x^{m} \cdot (1+x)^{-\ga_0-1}\cdot
          \prod_{j=1}^p (x+ s_j)^{-\ga_j-1} \dint x\\
        & = (-1)^{m+|\ga|+p-1}\cdot [1^{\ga_0+1}, s_1^{\ga_1+1}, \ldots,
          s_p^{\ga_p+1}] \id^m\log.
\end{align}
The recursion formula \Eqref{EQDD1}, the Leibniz rule \Eqref{EQDD5},
and the substitution rule \Eqref{EQSubstitution} lead to a large
variety of recursion formulas for the functions $\fH$. We will discuss
here the case of one and two variable functions and in particular
compare the two variable case to the examples listed at the end of
\cite{ConMos2011}.

\subsection{One variable functions} 
\label{SSecOneVar}
\subsubsection{} 
From \Eqref{EQPIFormula3a} we infer
\begin{equation}
\fHp {0,0},1;(s):= \fHp {0,0},1;(s,0):=[1,s]\log= \mlfrac \log s/s-1;=:\cL_0(s).
\end{equation}
Note that if we substitute $s=\exp(u)$ this function becomes
\begin{equation}
   \mlfrac u/e^u-1; = \sum_{j=0}^\infty \mlfrac B_j/j!; u^j,
\end{equation}
which is the generating function for the Bernoulli numbers. The fact
that by Proposition \plref{PIFormula} all the functions $\fH$ are
ultimately expressed in terms of the function $\mlfrac \log s/s-1;$ is
one of the ``conceptual explanations'' the formidable formulas (3) and
(4) in \cite{ConMos2011} are ``begging'' for.

Applying Proposition \plref{PIFormula} we find if
$m\in\{0,1,\ldots,\ga_1\}$
\begin{align}
  \fHp \ga,1; (s,m) &= \mlfrac (-1)^{|\ga_1|+m}/\ga!;
  \ffak D_s+|\ga|-m,\ga_0; \pl_s^{\ga_1} \mlfrac s^m \log s/s-1;\\
     & =   
     \mlfrac (-1)^{\ga_1+m}/\ga!;
     s^{m-\ga_1} \pl_s^{\ga_0} s^{|\ga|-m} \pl_s^{\ga_1} 
       \mlfrac s^m \log s/s-1;\\
     & =   
     \mlfrac (-1)^{\ga_1+m}/\ga!;
      \pl_s^{\ga_1} s^{m} \pl_s^{\ga_0} 
       \mlfrac s^{\ga_0} \log s/s-1;,\label{EQHalph3}
\end{align}
resp. for $m=0$
\begin{align}
   \fHp \ga,1;(s)&:=\fHp \ga,1; (s,0) = (-1)^{|\ga|} [1^{\ga_0+1}, s^{\ga_1+1}] \log
   \\
         & = (-1)^{|\ga|} [1^{\ga_0}, s^{\ga_1+1}] \cL_0,\quad
         \cL_0(s):=[1,s]\log\\
         & = 
     \mlfrac (-1)^{\ga_1}/\ga!;
     s^{-\ga_1} \pl_s^{\ga_0} s^{|\ga|} \pl_s^{\ga_1} 
       \mlfrac \log s/s-1;\label{EQHalph0}\\
       & = \mlfrac (-1)^{\ga_1}/\ga!; \pl_s^{|\ga|}
       \mlfrac s^{\ga_0} \log s/s-1;,\label{EQHalph1}
\end{align}
where the substitution rule \Eqref{EQSubstitution} was used.
For the equalities \Eqref{EQHalph3} and \eqref{EQHalph1} \cf~\Eqref{EQ3.3}.

\subsubsection{} 
\label{ssEx2}
We note the special case
\begin{equation}
  \begin{split}
\cL_m(s)&:=\fHp {0,m},1;(s,m) = \fHp {m,0},1;(s,0)
    = (-1)^m [1^{m+1},s]\log \\
    &= (-1)^m [1^m,s]\cL_0 = \mlfrac 1/m!; 
   \pl_s^m \mlfrac s^m \log s/s-1;\\
   & = \mlfrac (-1)^m/(s-1)^{m+1};
      \Bl \log s - \sum_{j=1}^m \mlfrac (-1)^{j-1}/j; (s-1)^j\Br
  \end{split}
\end{equation}
which was called ``modified Logarithm'' in \cite[Sec.~3 and
6]{ConTre2011}.

We list the first few functions explicitly. 
\begin{align}
\fHp {1,0},1; (s) & = \cL_1(s)= -[1,s]\cL_0 = - \mlfrac \log s -s +1 /
(s-1)^2;, \nonumber\\
\fHp {0,1},1; (s) & = \mlfrac s \log s -s +1 /s (s-1)^2;, \\
\fHp {1,1},1; (s) & = [1^2,s^2]\log = -\pl_s \fHp {1,0},1;(s)= 
   -\mlfrac 2 s \log s -s^2+1/(s-1)^3 s;.\nonumber
\end{align}

\subsection{Two variable functions}
\label{SSecTwoVar}
Instead of the clumsy $\fHp \ga,2;((a,b),0)$ we write
$\fHp \ga,2;(a,b)$. By \Eqref{EQPIFormula3a} and the substitution rule
\Eqref{EQSubstitution} we have
\begin{equation}
  \begin{split}
\fHp {\ga},2;(a,b)
    & =  (-1)^{|\ga|+1} [1^{\ga_0+1}, a^{\ga_1+1}, b^{\ga_2+1}]\log, \\
    & = (-1)^{|\ga|+1} [1^{\ga_0}, a^{\ga_1+1}, b^{\ga_2+1}]\cL_0\\
    & = (-1)^{|\ga|+1} \frac{1}{b-a}\Bl
      [1^{\ga_0}, a^{\ga_1}, b^{\ga_2+1}]\cL_0
      -[1^{\ga_0}, a^{\ga_1+1}, b^{\ga_2}]\cL_0\Br\\
      & = \frac{(-1)^{|\ga|+\ga_0+1}}{\ga_1!\ga_2!}\pl_a^{\ga_1}\pl_b^{\ga_2}  
      \frac 1{b-a} \Bl \cL_{\ga_0}(b)-\cL_{\ga_0}(a)\Br.
  \end{split}
\end{equation}
Thus in the special case $\ga_1=\ga_2=0$ we immediately obtain a
simple formula expressing two variable functions in terms of one
variable modified logarithms:
\begin{equation}\label{EQOneTwoReduction}
  \fHp {r,0,0},2;(a,b) = \frac {-1}{b-a}\bl \cL_r(b)-\cL_r(a)\br.
\end{equation}

\subsection{Comparison with the explicit formulas in \cite{ConMos2011}}

For two variable functions $\fHp \ga,2;(s)$ let us compare
our results to the explicit formulas given at the end
of \cite{ConMos2011}. We denote the function $H$ introduced there by
$H^{\textup CM}$. Then for the two variable functions we have by
definition 
$\HCM_{\ga_0+1,\ga_1+1,\ga_2+1}(a,b)=
\fHp \ga,2;(a,b)$.

In \cite{ConMos2011} the following formulas are given explicitly.
In the resp. first lines we list the formulas as stated in loc.~cit.,
in the resp. second lines we cancel common factors and write them
as a sum of fractions involving $\log(a), \log(b)$ plus terms
which do not contain logarithms. As a helper the open source computer
algebra system \texttt{Maxima} was used.
\begin{scriptsize}
\begin{align*}
\HCM_{1,1,1}(a,b) & = \mlfrac (-1+b) \log(a) - (-1+a)\log(b)/
    (-1+a)(-1+b)(-a+b); \\
      & = \mlfrac \log(a)/(a-1)(b-a); -
                 \mlfrac \log(b)/(b-1)(b-a);,\\
\HCM_{1,2,1}(a,b) & = \mlfrac (-1+b)\bl (-1+a)(a-b)+a(1-2a+b)\log(a)\br
         + (-1+a)^2 a \log (b)/ (-1+a)^2a(a-b)^2(-1+b); \\
        &=\mlfrac (b-2a+1)\log(a)/(a-1)^2(b-a)^2;
                +\mlfrac \log(b)/(b-1)(b-a)^2;
                  - \mlfrac 1/(b-a)(a-1)a;,\\
\HCM_{2,1,1}(a,b) & = \mlfrac (-1+b)^2\log(a) +
                      (-1+a)\bl (a-b)(-1+b) - (-1+a)\log(b)\br / 
                        (-1+a)^2(a-b)(-1+b)^2;\\
           &= - \mlfrac\log(a)/(b-a)(a-1)^2; + 
           \mlfrac  \log(b)/(b-1)^2(b-a);+ 
           \mlfrac 1/(b-1)(a-1);, 
\end{align*}
\begin{align*}
&\HCM_{2,2,1}(a,b) \\ & = 
\mlfrac (-1+b)\Bl (-1+a)(a-b)\bl 1+a^2-(1+a)b\br 
    +a(-1+3a-2b)(-1+b)\log(a)\Br -(-1+a)^3a\log(b)/
    (-1+a)^3a(a-b)^2(-1+b)^2;
    \\
             & = -\mlfrac (2b -3a +1)\log(a)/(b-a)^2(a-1)^3;
                 - \mlfrac \log(b)/(b-1)^2(b-a)^2;
                 + \mlfrac (a+1)b-a^2-1/(b-1)(b-a)(a-1)^2a;, 
\end{align*}
\begin{align*}
\HCM_{3,1,1}(a,b) & =
\mlfrac
(-1+a)(5+a(-3+b)-3b)(a-b)(-1+b)-2(-1+b)^3\log(a)+2(-1+a)^3\log(b)/2
(-1+a)^3(a-b)(-1+b)^3;
\\
     & = \mlfrac \log(a)/(b-a)(a-1)^3;
         -\mlfrac \log(b)/(b-a)(b-1)^3;
         +\mlfrac (a-3)b-3a+5/2(b-1)^2(a-1)^2;.
\end{align*}      
\end{scriptsize}

From the resp. second lines we see immediately that
$\HCM_{1,1,1}(a,b)=-[1,a,b] \log$ and that 
$\HCM_{1,2,1}(a,b)=-\pl_a \HCM_{1,1,1}(a,b)$.

To $\HCM_{2,1,1}$ and $\HCM_{3,1,1}$ we can apply 
\Eqref{EQOneTwoReduction} and obtain
\[\begin{split}
  \HCM_{2,1,1}(a,b) &= \frac {-1}{b-a} \bl\cL_1(b)-\cL_1(a)\br\\
  \HCM_{3,1,1}(a,b) &= \frac {-1}{b-a} \bl\cL_2(b)-\cL_2(a)\br.
\end{split}\]
Alternatively, one may employ the formulas in Proposition
\plref{PIFormula} and indeed one verifies
\[\begin{split}
  \HCM_{2,1,1}(a,b) & = \pl_s\restr{s=1} [s,a,b]\log\\
     & = - (a\pl_a +b\pl_b+2) [1,a,b]\log \\
     & =   (a\pl_a +b\pl_b+2) \HCM_{1,1,1}(a,b),\\
\HCM_{3,1,1}(a,b) & = \fHp{2,0,0},2;(a,b) 
      = -\frac 12 \ffak a\pl_a + b\pl_b+3,2; [1,a,b]\log\\
     & = -\frac 12 (a\pl_a +b\pl_b+3) (a\pl_a +b\pl_b+2) [1,a,b]\log \\
     & = \frac 12  (a\pl_a +b\pl_b+3) \HCM_{2,1,1}(a,b).
\end{split}\]     
Similarly,
\[\begin{split}     
\HCM_{2,2,1}(a,b) & = \fHp{1,1,0},2;(a,b)
               = -\pl_a \fHp{1,0,0},2;(a,b)\\
                      & =  -\pl_a \HCM_{2,1,1}(a,b).
\end{split}\]     

\subsection{Conclusion}
The possibilities to produce such formulas are endless. All these
formulas can be obtained, of course, by performing partial fraction
decompositions on the integrand of \Eqref{EQ2.2a} resp.
\Eqref{EQ2.2b}. However, the calculus of finite differences with its
various rules provides a convenient framework which allows to obtain
the formulas in a mechanical way.

\appendix
\section{Divided differences}
\label{SecDD}

Divided differences have their origin in interpolation theory; they
can be traced back to Newton. Although being standard textbook
material in numerical analysis, let us give a very quick summary here;
for a recent survey see \cite{deB2005}, a classical reference is
\cite{MT1951}. In the sequel all functions are assumed to be smooth.

\subsection{}
Let $f$ be a smooth function on a real interval $I$ and
let $x_0, x_1,\ldots$ a priori distinct points in $I$. Then one
defines recursively the \emph{divided differences}
\begin{equation}\label{EQDD1}\begin{split}
     \DD x_0; f &:= f(x_0),\\
     \DDots x_0,x_n; f &:= \mlfrac 1/x_0-x_n;\bl \DDots x_0,x_{n-1}; f 
         - \DDots x_1,x_n; f\br.
\end{split}\end{equation}
The first few divided differences are therefore
\begin{small}
\begin{align*}
    [x_0,x_1]f & = \mlfrac f(x_0)/(x_0-x_1);+ \mlfrac
                        f(x_1)/(x_1-x_0);,\\
    [x_0,x_1,x_2] f  & = \mlfrac f(x_0)/(x_0-x_1)(x_0-x_2);+
                         \mlfrac f(x_1)/(x_1-x_0)(x_1-x_2);+
                         \mlfrac f(x_2)/(x_2-x_0)(x_2-x_1);,
\end{align*}
\end{small}
and by induction one shows the explicit formula
\begin{equation}\label{EQDD2}
     \DDots x_0,x_n; f  = \sum_{k=0}^n f(x_k) 
            \prod_{j=0, j\not=k}^n (x_k-x_j)\ii,
\end{equation}
resp. the Genocchi-Hermite integral formula \cite[Sec.~1.6]{MT1951},
\cite[Sec.~9]{deB2005}\footnote{%
According to the historical remarks in \cite[Sec.~9]{deB2005} the
formula is due to Genocchi who communicated it to Hermite in a letter.
}
\begin{equation}\label{EQHermite}\begin{split}
   &\DDots x_0,x_n; f = \int_{\sum\limits_{j=0}^n s_j =1, s_j>0}
       f^{(n)}\bl\sum\limits_{j=0}^n s_j x_j\br \dint s_1\dots ds_n\\
       &= \int_{0\le t_n\le\ldots \le t_1\le 1}
       f^{(n)}\bl (1-t_1)x_0+\ldots+(t_{n-1}-t_n)x_{n-1}+t_nx_n\br
       \dint t_1\dots dt_n.
\end{split}\end{equation}
If $f$ is even analytic, e.~g.~if $f$ is already an interpolation
polynomial, and if $\gamma$ is a closed curve in the domain of $f$ encircling the
points $x_0,\ldots,x_n$ exactly once then by the Residue Theorem
and \Eqref{EQDD2} we have \cite[Sec.~1.7]{MT1951}
\begin{equation}\label{EQDD3}
   \DDots x_0,x_n; f = \mlfrac 1/2\pi i; \oint_\gamma f(\gz) \prod_{j=0}^n (\gz-x_j)\ii
   \dint \gz.
\end{equation}   

\subsection{The confluent case}
\label{ssA2}
From the right hand sides of \Eqref{EQHermite} and \Eqref{EQDD3} we
see that $\DDots x_0,x_n;f$ is a smooth (analytic) function of the
variables $x_0,\ldots, x_n$. Therefore, one uses these formulas to
extend the divided differences to the confluent case of repeated
arguments. Thus, for any $x_0,\ldots,x_n\in I$, regardless of being
pairwise distinct or not, $\DDots x_0,x_n;f$ is a smooth
(analytic) symmetric function of its arguments.

The divided differences can be calculated quite efficiently from the
recursion system \Eqref{EQDD1} and with some care this can also be
extended to the confluent case \cite[1.8]{MT1951}.
Alternatively, there is a differentiation formula relating a divided
difference with repeated arguments to one with distinct arguments.
This is obtained by differentiating by the parameters under the
integral in \Eqref{EQDD3} or in Genocchi-Hermite's formula \Eqref{EQHermite}.

To explain this consider a multiindex $\ga=(\ga_0,\ldots,
\ga_n)\in \N^{n}$ and $x_0,\ldots, x_n\in I$. We write
$\DDots x_0^{\ga_0+1},x_n^{\ga_n+1};f$ for the divided
difference $\DDots u_0,u_{|\ga|+n};f$ where the tuple
$(u_0,\ldots,u_{|\ga|+n})$ contains exactly $\ga_0+1$ copies of $x_0$,
$\ga_1+1$ copies of $x_1$ etc. From \Eqref{EQDD3} we infer
\cite[Sec.~1.8]{MT1951}
\begin{align}
   [x_0^{\ga_0+1}&,\ldots , x_n^{\ga_n+1}]f
   = \frac 1{2\pi i} \oint_\gamma f(\gz) \prod_{j=0}^n 
  (\gz -x_j)^{-\ga_j-1} \dint\gz \nonumber \\
  & = \mlfrac 1/\ga!; \pl_x^\ga \DDots x_0,x_n;f \label{EQDD4}\\
  & =  \sum_{k=0}^n \mlfrac 1/\ga_k!; \pl_{x_k}^{\ga_k}
     \Bl f(x_k) \prod_{j=0, j\not=k}^n
     (x_k-x_j)^{-\ga_j-1}\Br.\nonumber
\end{align}
Recall that we are using the multiindex notation for partial
derivatives and factorials, \cf Sec.~\ref{ssNot}.

\subsection{Leibniz rule}
The Leibniz rule for the divided difference of
a product \cite[Sec.~4]{deB2005}
\begin{equation}\label{EQDD5}
  \DDots x_0,x_n; (f\cdot g) = \sum_{j=0}^n \DDots x_0,x_j; f \cdot 
  \DDots x_j,x_n; g,
\end{equation}
can be used to deduce interesting recursion formulas. Namely, taking 
$g=\id$ or $\id^2$ we find
\[\begin{split}
  \DDots x_0,x_n;(\id f) &= 
    x_0 \cdot\DDots x_0, x_n; f + \DDots x_1,x_n; f\\
  \DDots x_0,x_n;(\id^2 f)&= 
    x_0^2 \cdot\DDots x_0, x_n; f +\\
    &\qquad +(x_0+x_1)\cdot\DDots x_1,x_n; f
    + \DDots x_2,x_n; f.
\end{split}\]
Of course, this can be extended to arbitrary powers, \cf
\cite[Sec.~1.31]{MT1951}.

\subsection{Substitution rule}\label{ssSubstitution} 
The following
generalization of the recursion scheme \Eqref{EQDD1} can be proved
easily by induction (\textit{cf.}~\cite[Prop. 11]{Jam14}). Given
$y_0,\ldots, y_p$ put $g(x):=[y_0,\ldots,y_p,x]f$. Then
\begin{equation}\label{EQSubstitution}
  \DDots x_0,x_q;g = [y_0,\ldots, y_p, x_0,\ldots, x_q]f.
\end{equation}  

\section{Homogeneous functions and totally characteristic differential
operators}
\label{AppB}

We muse a little about the totally characteristic derivative $x\pl_x$, 
certainly a little more than is barely necessary to see the formulas
\Eqref{EQPIFormula3} and \Eqref{EQPIFormula3a}.

\subsection{}\label{ss3.2} 
We will make frequent use of the
\emph{rising} and \emph{falling} factorials (aka Pochhammer
symbol) for which we adopt D. Knuth's notation 
\cite[p. 50]{Knu97}\footnote{He actually attributes it to A.~Capelli (1893)
and L.~Toscano (1939).}
\begin{align}
    \rfak a, n; &:= a\cdot (a+1)\cldots (a+n-1), &\rfak a, 0; &:= 1,
    \label{EQ3.1a} \\
    \ffak a, n; &:= a\cdot (a-1)\cldots (a-n+1), &\ffak a, 0; &:= 1. 
    \label{EQ3.1b}
\end{align}
Furthermore we denote by $D_x=x\pl_x$ the totally characteristic
derivative with respect to the variable $x$. For a polynomial $p\in\C[t]$ 
we write $p(\pl_x)$ resp. $p(D_x)$ for $\pl_x$ resp. $D_x$ inserted
into the indeterminate $t$. In particular, e.~g. , $\rfak D_x+k,n;$
stands for $D_x$ inserted into the polynomial $\rfak t+k,n;\in\C[t]$.

As an example we note the formula
\begin{equation}\label{EQ3.3}
\begin{split}
 x^{a} \pl_x^n x^{b} \pl_x^m( x^c \cdot)
 & = x^{a+b+c-n-m}\cdot\ffak D_x+b+c-m,n;\cdot \ffak D_x+c,m;\\
   & =  x^{a+b-n} \pl_x^m x^{n+m-b} \pl_x^n( x^{b+c-m} \cdot), 
      \quad n,m\in \N,\; a,b,c\in\C.
\end{split}     
\end{equation}
This can be seen in a lot of ways. The obvious way is to expand the
l.~h.~s.~via the Leibniz' rule and then apply the Binomial Theorem. A
much quicker way is to note that we have $D_x x^z = z \cdot x^z$ for any
complex number $z$ and that for any such $z$
\begin{equation*}
 \begin{split}
  x^{a} \pl_x^n x^b \pl_x^m  x^{c+z} &= 
 \ffak z+b+c-m,n;\cdot \ffak z+c,m; \cdot x^{z+a+b+c-n-m} \\
    & =  x^{a+b-n} \pl_x^m x^{n+m-b} \pl_x^n x^{b+c-m+z},
\end{split}
\end{equation*}
and since the l.~h.~s.~and the r.~h.~s.~of \Eqref{EQ3.3} are
polynomials in $D_x$, they must be equal. \Eqref{EQ3.3} contains
\Eqref{EQHalph1} as special case.

An immediate consequence of \Eqref{EQ3.3} is the fact that the
family of differential operators 
$x^{n-k} \pl_x^n (x^{k} \cdot), \quad k,n\in \Z, 0\le k\le n$, is commuting.

We mention another important property of totally characteristic
operators which is useful if one deals with the first integrand
\Eqref{EQ2.2a} which is a function of $s_jx$. 
Namely, if $p(t)\in\C[t]$ is a complex polynomial and $f$ a 
differentiable function then
\begin{equation}
  p(D_x) f(xs) = p(D_s) f(xs) = \bl p(D) f\br (xs).
\end{equation}  

\subsection{Homogeneous functions}\label{ss3.3}
If $\gG\subset \R^q$ is an
open cone we denote by $\cP^a(\gG)=\cP^a$ the space of smooth
functions on $\gG$ which are $a$--homogeneous, that is
\begin{equation*}
  f(\gl\cdot \xi) = \gl^a\cdot f(\xi).
\end{equation*}
Recall from \Eqref{EQ2.3} that the function $\fM(\cdot,z)$ is $(-|\ga|-p+z)$--homogeneous.
$a$--Homogeneous functions satisfy \emph{Euler's} identity
\begin{equation}\label{EQEuler}
  \sum_{j=1}^q D_j f = a\cdot f.
\end{equation}  
Consequently, on $\cP^a$ we may replace $D_1$ by $-\sum_{j=2}^q D_j + a$. 

\subsection{The basic function $b(x)=\mlfrac 1/1+x;$} 
Using the above
mentioned rules the following formulas for the basic function
$b(x)=\mlfrac 1/1+x;$ occurring in the integral \Eqref{EQ2.2a} can
easily be derived\footnote{Of course, they can also be derived by
brute force.}
\begin{equation}
  \begin{split}
 \pl b^l & = - l\cdot b^{l+1},\\
 (D+l) b^l & = l\cdot  b^{l+1},\\
 \pl^n b & = (-1)^n\cdot n!\cdot  b^{n+1},\\
 \rfak D+1,n;b &= \ffak D+n,n; b  = n!\cdot  b^{n+1},  \\
 x^{n-k} \pl^n x^k b(x) &= \ffak D_x+k,n; b(x) 
    = (-1)^{n-k} \cdot n!\cdot  x^{n-k} \cdot b(x)^{n+1}.
  \end{split}  
\end{equation}

\subsection{Proof of \Eqref{EQPIFormula3} and \Eqref{EQPIFormula3a} }
Recall from \Eqref{EQ2.3} that $\fM(\cdot,m)$ is
$(-|\ga|-p+z)$--homogeneous. Therefore, for $\ga=(\ga_0,\ga')$
we infer from \Eqref{EQPIFormula2} and \Eqref{EQ3.3} for
$m\in\{0,1,\ldots,|\ga'|+p-1\}$
\begin{equation}\label{EQ551}\begin{split}
   s_0^{\ga_0} \fM(s,m)  
     & =
       \mlfrac (-1)^{\ga_0}/\ga!; s_0^{\ga_0} \pl_{s_0}^{\ga_0}
          \fMp \ga',p;(s,m),\\
          & = \mlfrac (-1)^{\ga_0}/\ga!;\, \ffak D_{s_0},\ga_0;\,
          \fMp \ga',p;(s,m),
\end{split}\end{equation}          
and since $\fMp \ga',p;(\cdot,m)$ is $(-|\ga'|-p+m)$--homogeneous
we may replace $\ffak D_{s_0},\ga_0;$ by
\begin{equation}\label{EQ552}\begin{split}
\Bffak -\sum_{k=1}^p D_{s_j}-|\ga'|-p+m,\ga_0;
  & = (-1)^{\ga_0} 
     \Brfak \sum_{k=1}^p D_{s_j}+|\ga'|+p-m,\ga_0;\\
  & = (-1)^{\ga_0} 
     \Bffak \sum_{k=1}^p D_{s_j}+|\ga|+p-1-m,\ga_0;.
\end{split}\end{equation}
From \Eqref{EQ551} and \eqref{EQ552} the remaining claims
of Proposition \plref{PIFormula} follow.\qed



\bibliography{mlbib} 
\bibliographystyle{amsalpha-lmp}

\end{document}